\newcommand{\nicolasg}[1]{#1}
\newcommand*\diff{\mathop{}\!\mathrm{d}}
\newcommand{\R}{\mathds{R}}
\newcommand{\A}{\mathds{A}}
\newcommand{\1}{\mathds{1}}
\newcommand{\N}{\mathds{N}}
\newcommand{\M}{\mathcal{M}}
\newcommand{\W}{\mathds{W}}
\newcommand{\spc}{\;\;}
\newcommand{\norm}[1]{\|#1\|}
\let\phi\varphi
\newcommand{\X}{\mathds{X}}
\newcommand{\Z}{\mathds{Z}}
\newcommand{\sB}{\mathcal{B}}
\newcommand{\sM}{\mathcal{M}}
\newcommand{\sX}{\mathcal{X}}
\newcommand{\sY}{\mathcal{Y}}
\newcommand{\sF}{\mathcal{F}}
\newcommand{\Ceil}[1]{\lceil #1 \rceil}
\newcommand{\inv}{\mathrm{inv}}%
\crefname{hypothesis}{Hypothesis}{Hypotheses}
\title{ERGODICITY CONDITIONS FOR CONTROLLED STOCHASTIC NONLINEAR SYSTEMS UNDER INFORMATION CONSTRAINTS: A Volume Growth Approach\thanks{A brief version of this paper has been scheduled for presentation at the 24th International Symposium on Mathematical Theory of Networks and
Systems (MTNS 2021). \funding{This work was funded by the Natural Sciences and Engineering Research Council of Canada.}}}
\author{Nicol\'{a}s Garcia\thanks{Is with the Department of Mathematics and Statistics at Queen's University, Kingston ON, Canada. Email: 11ng31@queensu.ca}
\and Christoph Kawan\thanks{Is with the Institute of Informatics at the LMU Munich, Germany. He is supported by the German Research Foundation (DFG) through the grant ZA 873/4-1. Email: christoph.kawan@lmu.de}
\and Serdar Y\"{u}ksel\thanks{Is with the Department of Mathematics and Statistics at Queen's University, Kingston ON, Canada. Email: yuksel@queensu.ca}
}
\begin{document}

\maketitle

\begin{abstract}
Consider a stochastic nonlinear system controlled over a possibly noisy communication channel. An important problem is to characterize the largest class of channels which admit coding and control policies so that the closed-loop system is stochastically stable. In this paper we consider the stability notion of (asymptotic) ergodicity. We prove lower bounds on the channel capacity necessary to achieve the stability criterion. Under mild technical assumptions, we obtain that the necessary channel capacity is lower bounded by the log-determinant of the linearization, double-averaged over the state and noise space. We prove this bound by introducing a modified version of invariance entropy, and utilizing the almost sure convergence of sample paths guaranteed by the pointwise ergodic theorem. Our results generalize those for linear systems, and are in some cases more refined than those obtained for nonlinear systems via information-theoretic methods.
\end{abstract}

\begin{keywords}
Stochastic stabilization; control under communication constraints; asymptotic mean stationarity; ergodicity; invariance entropy%
\end{keywords}

\begin{AMS}
93E15, 93C10, 37A35%
\end{AMS}

\section{Introduction}

In this paper, we consider a stochastic nonlinear system controlled over a possibly noisy communication channel. We consider the problem of determining necessary conditions on channel capacity required for the existence of coding and control policies so that the closed-loop system is stochastically stable. The stability criterion considered is asymptotic ergodicity, by which we mean the existence of an asymptotically mean stationary measure which is also ergodic. Our analysis considers systems of the form%
\begin{equation*}
	x_{t+1} = f(x_t,w_t) + u_t%
\end{equation*}
where $x_t$ and $u_t$ take values in $\R^N$ and $w_t$ takes values in an abstract probability space. The variables $x_t,w_t$ and $u_t$ represent the state, noise, and control action at time $t$, respectively. The noise is modeled in an i.i.d.~fashion and the initial state $x_0$ is considered random and independent of the noise variables.%

In the case of a deterministic system, the notion of invariance entropy has been used to study a related problem, namely stabilization in the sense of set-invariance \cite{colonius2009invariance}. The invariance entropy of a compact subset $Q$ of the state space is defined as%
\begin{equation*}
  h_{\inv}(Q) \coloneqq \lim_{\tau \to \infty}\frac{1}{\tau}\log r_{\inv}(\tau,Q),%
\end{equation*}
where $r_{\inv}(\tau,Q)$ is the minimum number of control inputs required to make $Q$ invariant on the time interval $[0,\tau]$ for arbitrary initial states in $Q$. The invariance entropy measures the smallest average rate of information that must be transmitted to a controller to render $Q$ invariant.%

The motivation for the above definition arises by observing that with $n$ bits of information available at the controller side, at most $2^n$ different states can be distinguished, and therefore at most $2^n$ different control inputs can be generated. 

In the case of stochastic systems, this reasoning does not apply directly: (i) Asking for a compact subset of the state space to be invariant is too restrictive to be a useful notion of stability. For example, if the system is subject to unbounded noise, the state process may leave a given compact set regardless of the control policy. Therefore, we consider here instead notions of stochastic stability such as ergodicity and asymptotic mean stationarity (AMS). (ii) If the channel is noisy, the informational content of received codewords cannot be measured by the number of distinct possible receiver outputs. As an extreme case, consider a channel where the channel inputs and outputs are independent, and hence the (information-theoretic) channel capacity is zero. In this case, no reliable information can be transmitted across the channel, and thus, the methodology presented above for noise-free models by means of a direct application of invariance entropy is no longer applicable. On the other hand, the information-theoretic approach for this problem does not allow one to develop a geometric analytical refinement afforded by a stochastic volume growth approach; and one of our main contributions in this paper is to develop a framework, alternative to methods building on directed mutual information \cite{yuksel2016stationary}, to approach the study of nonlinear systems controlled over noisy channels.%

In \cite{kawan2019invariance}, the notion of invariance entropy was generalized for use in the stability analysis of discrete-time stochastic systems controlled over finite-capacity channels. The introduced quantity, called \emph{stabilization entropy}, is inspired by both invariance entropy and measure-theoretic entropy of dynamical systems, in particular by a characterization of the latter due to Katok \cite{katok1980lyapunov} and a generalization thereof developed in Ren et al.~\cite{ren2011topological}.%

In the paper at hand, we provide an operationally and mathematically significant refinement, where our stability criterion is stochastic in nature, but deterministic in its sample path limits, as we will make precise further below. Our stronger notion of stability guarantees the almost sure convergence of sample paths which asymptotically visit each subset of the state space at a frequency given by the AMS measure of the subset. We further generalize the notion of stabilization entropy by considering a finite collection of subsets rather than one single subset of the state space, and prove stronger results, using the pointwise ergodic theorem.%

The paper is organized as follows. Section \ref{sec:lr} provides a brief literature review and presents our contributions. Some fundamental definitions and technical tools are introduced in Section \ref{sec:perlims}. The main results are presented and discussed in Section \ref{sec:boc}, while their proofs are given in Section \ref{sec:pfs}. Some definitions and auxiliary results are outlined in the appendix.%

\section{Literature review and contributions}\label{sec:lr}

The problem of determining necessary and sufficient conditions for stochastic stability of Markov chains, in the form of the existence of a stationary measure and positive Harris recurrence, has been studied using Lyapunov methods and we refer the reader to \cite{meyn2012markov} for a comprehensive treatment. To implement stabilizing control policies however, full feedback is often required (or in case of partially observed models, restrictive invertibility conditions related to observability are needed), a condition which is too restrictive in many modern application areas. For example, the controller may have access only to an estimate of the state encoded in $n$ bits at each time step, in which case the typically uncountable state space must be quantized using a finite ($2^n$) number of symbols. As such, the assumption that the controller has full state access, at arbitrary levels of precision, is no longer valid. In particular, this is the case in networked systems, where communication resources have to be distributed among many agents, and in underwater applications, where communication is naturally constrained due to the physical properties of the environment. The emergence of such problems has motivated the study of control problems subject to information constraints, and the development of the general theory of information-based control.%

In the case of linear systems, explicit formulas have been obtained for the smallest channel capacity above which stabilization is possible. Under certain stability notions, the capacity of the channel must not be smaller than the logarithm of the unstable open-loop determinant. The earliest contributions can be found in \cite{wong1999systems,baillieul1999feedback}. These formulas, known as data-rate theorems, were further generalized in \cite{hespanha2002towards}, \cite{nair2004stabilizability} and \cite{tatikonda2004control}. For a more complete discussion of related results, see \cite{andrievsky2010control,franceschetti2014elements,matveev2009estimation,nair2007feedback}.%

For nonlinear systems, most of the results in the literature have been obtained for deterministic systems controlled over noiseless channels. To this end, the notion of topological feedback entropy was introduced in \cite{nair2004topological} for the study of discrete-time systems. A related result, by the same authors, is a characterization of the smallest data rate required for stabilization to an equilibrium point as the log-sum of the unstable eigenvalues of the linearization. For the case of continuous-time systems, the notion of invariance entropy was introduced in \cite{colonius2009invariance}. Both topological feedback entropy and invariance entropy capture the smallest average rate of information required to keep the state inside a compact set. When adapted to the discrete-time setting, the two notions are equivalent, as was shown in \cite{colonius2013note}. An extensive review of these concepts is provided in \cite{kawan2013invariance}. A recent related development was the introduction of metric invariance entropy in \cite{colonius2018metric}, a notion based on conditionally invariant measures.%

Other studies on control of nonlinear systems over communication channels have focused on constructive schemes (and not on converse theorems), primarily for noise-free systems and channels, cf.~\cite{liberzon2005stabilization,de2004stabilizability,li2004data}. Recently, necessary conditions in the form of lower bounds on the channel capacity for a certain class of stochastic nonlinear systems over both noiseless and noisy channels were established in \cite{kawan2019invariance} and \cite{yuksel2016stationary}, where the stability notion considered in the first paper is AMS, and the notions considered in the second paper are AMS, ergodicity, and positive Harris recurrence. In \cite{kawan2019invariance}, the notion of stabilization entropy is used, while \cite{yuksel2016stationary} relies on information-theoretic techniques, where the different approaches arrive at complementary results.%

For a class of nonlinear systems controlled over noiseless channels \cite{yuksel2016stationary}, and for linear systems over Gaussian, discrete noiseless, erasure and discrete noisy channels \cite{yuksel2013stochastic}, as well as \cite{YukTAC2010,YukselAMSITArxiv,YukMeynTAC2010} establish the ergodicity property under information constraints. In particular, \cite[Thm.~4.2]{YukselAMSITArxiv} shows that for a linear system with a diagonalizable matrix $A$, controlled over a DMC, the AMS and ergodicity properties can be achieved whenever the channel capacity exceeds the log-sum of the unstable eigenvalues. Hence, in this case the lower bounds following from the results in this paper match with the upper bound. For nonlinear systems of the form%
\begin{equation*}
  x_{t+1} = f(x_t,u_t) + w_t%
\end{equation*}
with $f(\cdot,u):\R^N \rightarrow \R^N$ invertible and $C^1$ for every $u$ and $\{w_t\}$ an i.i.d.~sequence of zero-mean Gaussian variables, it is shown in \cite[Thm.~5.1]{yuksel2016stationary} that ergodicity (and thus AMS) can be achieved over a over a discrete noiseless channel under the following assumption: There exist a function $\kappa:\R^N \rightarrow \R^M$ with $\kappa(0)=0$ and a constant $a>0$ such that $|f(x,\kappa(z))|_{\infty} \leq a |x - z|_{\infty}$ for all $x,z\in\R^N$. In this case, the minimal required channel capacity $C_0$ satisfies $C_0 \leq N \log(a) + 1$. Therefore, the goal of ergodicity is attainable even for systems with additive unbounded noise. Though not directly related, further relevant papers on the general subject of nonlinear control under information constraints include \cite{Mehta1,Mehta2,Martins2,diwadkar2012limitations,vaidya2012stabilization,zang2003nonlinear}.%

It is important to note that for linear systems, any local dynamical or control-theoretic property is also a global property. As such, the problems of local stabilization (stabilization to a point), semi-global stabilization (set-invariance) and global stabilization (stochastic stability) can all be handled with similar methods, leading to the aforementioned data-rate theorem in each case. For nonlinear systems, however, the three stability problems are fundamentally different and require distinct approaches. For example, linearization techniques work well for local problems, for semi-global problems only under specific assumptions, and almost never for global problems. In addition, the presence of (possibly unbounded and additive) noise requires an approach fundamentally different from the machinery utilized for local stabilization problems.%

{\bf Contributions.} In this paper, we study the problem of stochastic stabilization of a nonlinear stochastic system controlled over a finite-capacity communication channel, with the stability criterion being the (asymptotic) ergodicity of the process. As a primary contribution, we develop a stochastic volume growth technique tailored to ergodicity properties, which is in contrast with the information-theoretic methods studied earlier, and establish refined and more general results on information transmission requirements for making the controlled stochastic nonlinear system ergodic. In particular, compared with \cite{yuksel2016stationary}, we allow arbitrary coding and control policies and do not impose an entropy growth condition apriori. Our results generalize the linear setups considered extensively in the literature.%

\section{Preliminaries}\label{sec:perlims}

\subsection{Notation}

Throughout the paper, $\N$ denotes the strictly positive integers, $\Z_+$ denotes $\N \cup \{0\}$ and $\R_{>0}$ the strictly positive real numbers. We write $[a;b]$ for a discrete interval, i.e., $[a;b] = \{a,a+1,\ldots,b\}$ for any $a \leq b$ in $\Z$. The notation $\sB(\mathds{X})$ is used for the Borel $\sigma$-algebra of a Polish space $\mathds{X}$. Furthermore, $\Sigma$ denotes the space of sequences in a Polish space $\X$, i.e., $\Sigma = \X^{\Z_+}$, and $\sB(\Sigma)$ the Borel $\sigma$-algebra of $\Sigma$, which is generated by cylinder sets. If $x \in \X^{\Z_+}$, we write $x_{[0,t]} = (x_0,x_1,\ldots,x_t)$ for any $t \in \Z_+$. By $m$ we denote the Lebesgue measure on $\R^N$ for any $N\in\N$. All logarithms are taken to the base $2$.%

\subsection{Stochastic stability and ergodic properties}

In this section, we provide some basic definitions, and characterize the stability criterion considered in this paper: asymptotic mean stationarity with the associated AMS measure resulting in an ergodic state process. We refer the reader to \cite{GrayKieffer,gray2009probability,GrayInfo} for a detailed study of the concepts presented in this section.%

First, recall some basic facts from ergodic theory: A measurable map $T:\Omega \rightarrow \Omega$ on a probability space $(\Omega,\sF,P)$ is called \emph{measure-preserving} if $P(T^{-1}(A)) = P(A)$ for all $A \in \sF$. An event $A \in \sF$ is \emph{$T$-invariant} if $A = T^{-1}(A)$ (up to a set of measure zero). We denote by $\sF_{\inv(T)}$ the set of all $T$-invariant measurable sets, which is a $\sigma$-algebra. A measure-preserving map $T$ is called \emph{ergodic} if $P(A) \in \{0,1\}$ for all $A \in \sF_{\inv(T)}$. Note that ergodicity is a property of a system $(\Omega,\sF,P,T)$, but sometimes we also say that ``$T$ is ergodic'', or occasionally ``$P$ is ergodic'', when the other components of the system are clear from the context.%

A fundamental result in ergodic theory is the following pointwise ergodic theorem.

\begin{theorem}
Let $(\Omega,\sF,P)$ be a probability space and $T:\Omega \rightarrow \Omega$ a measure-preserving map. Then for any $f \in L^1(\Omega,\sF,P)$ we have%
\begin{equation*}
  \frac{1}{N}\sum_{k=0}^{N-1} f \circ T^k \xrightarrow[N \rightarrow \infty]{a.s.} \phi%
\end{equation*}
for some $\phi \in L^1(\Omega,\sF_{\inv(T)},P|_{\sF_{\inv(T)}})$ satisfying $\int \phi \diff P = \int f \diff P$. If, in addition, $T$ is ergodic, then $\phi$ is almost everywhere constant and thus%
\begin{equation*}
  \frac{1}{N}\sum_{k=0}^{N-1} f \circ T^k \xrightarrow[N \rightarrow \infty]{a.s.} \int f \diff P.%
\end{equation*}
\end{theorem}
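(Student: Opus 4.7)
The plan is to prove Birkhoff's pointwise ergodic theorem via the classical route through the maximal ergodic theorem. First I would introduce the ergodic averages $A_N f := \frac{1}{N}\sum_{k=0}^{N-1} f \circ T^k$ and define $\bar{f} := \limsup_{N\to\infty} A_N f$, $\underline{f} := \liminf_{N\to\infty} A_N f$ as extended-real-valued measurable functions. The identity $A_N f \circ T - A_N f = (f \circ T^N - f)/N$ tends to $0$ almost surely wherever the limsup/liminf is finite, which shows that $\bar{f}$ and $\underline{f}$ are $T$-invariant, hence $\sF_{\inv(T)}$-measurable after modification on a null set.

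The main analytic step is the \emph{maximal ergodic theorem}: for $g \in L^1(\Omega,\sF,P)$ and $E := \{\sup_{N\geq 1} A_N g > 0\}$, one has $\int_E g \, dP \geq 0$. I would prove this via Hopf's method: with $S_k g := \sum_{j=0}^{k-1} g \circ T^j$ and $M_N := \max(0, S_1 g, S_2 g, \ldots, S_N g)$, one checks that $g \geq M_N - M_N \circ T$ on $\{M_N > 0\}$, using the recursion $S_k g = g + S_{k-1} g \circ T$ together with the fact that $M_N = \max_{1 \leq k \leq N} S_k g$ on this set. Integrating, and using the $T$-invariance of $P$ with $M_N \geq 0$ to bound $\int \1_{\{M_N > 0\}} (M_N \circ T) \, dP \leq \int M_N \circ T \, dP = \int M_N \, dP$, yields $\int_{\{M_N > 0\}} g \, dP \geq 0$. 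Since $\{M_N > 0\} \uparrow E$, dominated convergence (with $|g|$ as envelope) gives the claim.

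From the maximal inequality I would deduce the almost sure convergence of $A_N f$ as follows. For rationals $\alpha < \beta$, the set $E_{\alpha,\beta} := \{\underline{f} < \alpha\} \cap \{\bar{f} > \beta\}$ is $T$-invariant; applying the maximal inequality within the invariant subspace $E_{\alpha,\beta}$ to the integrands $f - \beta$ and $\alpha - f$ (whose associated suprema are strictly positive everywhere on $E_{\alpha,\beta}$ by the very definition of $\bar{f}$ and $\underline{f}$) yields $\beta \, P(E_{\alpha,\beta}) \leq \int_{E_{\alpha,\beta}} f \, dP \leq \alpha \, P(E_{\alpha,\beta})$, forcing $P(E_{\alpha,\beta}) = 0$. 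A countable union over rationals gives $\underline{f} = \bar{f} =: \phi$ almost surely. Fatou's lemma applied to $|A_N f|$ gives $\phi \in L^1$, and the identity $\int \phi \, dP = \int f \, dP$ follows by splitting $f = f^+ - f^-$, combining Fatou for one inequality with a truncation/dominated-convergence argument (first for bounded $f$, then $L^1$-approximation) for the other.

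The ergodic case is then immediate: $\phi$ is $\sF_{\inv(T)}$-measurable, and when $T$ is ergodic $\sF_{\inv(T)}$ is $P$-trivial, so $\phi$ is almost everywhere constant; this constant must equal $\int \phi \, dP = \int f \, dP$ by the integral identity just established. The main obstacle I expect is the maximal ergodic theorem itself — the pointwise inequality $g \geq M_N - M_N \circ T$ on $\{M_N > 0\}$ and the careful handling of the indicator under the $T$-invariance of $P$ require precision — but once it is in hand, the remainder is routine measure-theoretic bookkeeping.
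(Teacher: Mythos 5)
The paper does not prove this statement: it is stated as the classical Birkhoff pointwise ergodic theorem, a textbook result that the authors cite as background, so there is no in-paper proof to compare against. Your sketch is the standard route through the maximal ergodic theorem (Hopf/Garsia), and it is essentially correct and complete at the level of a proof outline: the recursion $S_k g = g + S_{k-1}g\circ T$ together with $S_{k-1}g\circ T \le M_N\circ T$ for $0\le k-1\le N-1$ gives $M_N \le g + M_N\circ T$ on $\{M_N>0\}$, the integration and monotone exhaustion $\{M_N>0\}\uparrow E$ are handled correctly, and the $E_{\alpha,\beta}$ argument with rational $\alpha<\beta$ correctly forces $\underline f = \bar f$ a.s.\ (and in fact rules out the sets where the two disagree as extended reals). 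The $L^1$ membership of $\phi$ via Fatou, the bounded-then-approximate argument for $\int\phi\,dP=\int f\,dP$, and the reduction of the ergodic case to triviality of $\sF_{\inv(T)}$ are all standard and sound.

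One small imprecision: you justify the $T$-invariance of $\bar f$ and $\underline f$ by saying that $(f\circ T^N - f)/N$ ``tends to $0$ almost surely wherever the limsup/liminf is finite.'' The a.s.\ convergence $(f\circ T^N)/N\to 0$ does hold, but it is not a consequence of finiteness of $\bar f$; it requires its own argument (e.g.\ $\sum_N P(|f\circ T^N|>\varepsilon N)=\sum_N P(|f|>\varepsilon N)<\infty$ by measure preservation and integrability, then Borel--Cantelli), or one can avoid it entirely by writing $A_N f\circ T = \tfrac{N+1}{N}A_{N+1}f - \tfrac{1}{N}f$ and passing to the limsup directly. Either fix is one line, so this is a phrasing gap rather than a structural one; the rest of the proposal stands.
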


In the following, we fix a Polish space $\X$ and the associated sequence space $\Sigma = \X^{\Z_+}$. The shift map on $\Sigma$ is defined by%
\begin{equation*}
  \theta:\Sigma \rightarrow \Sigma,\quad (\theta x)_t :\equiv x_{t+1}\quad \mbox{for all\ } x = (x_t)_{t\in\Z_+} \in \Sigma.%
\end{equation*}
A measure $\mu$ on $(\Sigma,\sB(\Sigma))$ is called \emph{stationary} if $\mu(\theta^{-1}(B)) = \mu(B)$ for all $B \in \sB(\Sigma)$, i.e., if $(\Sigma,\sB(\Sigma),\mu,\theta)$ is a measure-preserving system.%

A stochastic process $x = (x_t)_{t \in \Z_+}$ taking values in $\X$ (with underlying probability space $(\Omega,\sF,P)$) is called%
\begin{itemize}
\item \emph{stationary} if its process measure is stationary, i.e., $P(\{\omega \in \Omega : x(\omega) \in B\}) = P(\{\omega \in \Omega : (\theta x)(\omega) \in B\})$ for all $B \in \sB(\Sigma)$.%
\item \emph{asymptotically mean stationary (AMS)} if there exists a probability measure $\tilde{Q}$ on $(\Sigma,\sB(\Sigma))$ such that%
\begin{equation*}
	\lim\limits_{T\rightarrow \infty} \frac{1}{T}\sum_{t=0}^{T-1} P(\theta^{-t}(B)) = \tilde{Q}(B) \quad \mbox{for all\ } B \in \sB(\Sigma).%
\end{equation*}
It can easily be shown that the measure $\tilde{Q}$ is stationary. We can also obtain a measure $Q$ on $(\X,\sB(\X))$ by projecting $\tilde{Q}$ down to any of its coordinates. It follows that for any $B \in \sB(\X)$%
\begin{equation*}
  \lim\limits_{T\rightarrow \infty} \frac{1}{T}\sum_{t=0}^{T-1} P(x_t \in B) = Q(B).%
\end{equation*}
\end{itemize}

Let $\mu$ denote the process measure on $(\Sigma,\sB(\Sigma))$ and suppose that the system $(\Sigma,\sB(\Sigma),\mu,\theta)$ is ergodic. Observe that for a set $B \in \sB(\X)$, by the pointwise ergodic theorem, we have%
\begin{equation*}
  \mu\Bigl(\Bigl\{x \in\Sigma : \lim_{T \to \infty}\frac{1}{T}\sum_{t=0}^{T-1}\1_B(x_t) = \int \1_B(x_0)\diff \mu(x)\Bigr\}\Bigr) = 1%
\end{equation*} 
which we can (using the notation $\mu$ also for the projection of $\mu$ to $\X$) rewrite as%
\begin{align}\label{spce}
  \mu\Bigl(\Bigl\{x \in\Sigma : \lim_{T \to \infty}\frac{1}{T}\sum_{t=0}^{T-1}\1_B(x_t) = \mu(B)\Bigr\}\Bigr) = 1.%
\end{align}
Thus, if the stochastic process is ergodic, then the set of sample paths which visit a Borel set $B$ with frequency $\mu(B)$ is of full measure. This is a strong notion of stability, and is a key ingredient in the proofs of the theorems in this paper.%

However, we can relax ergodicity of the process measure somewhat and it turns out that (\ref{spce}) holds for a larger class of processes:%

\begin{definition}
Consider a stochastic process which is AMS with asymptotic mean $Q$. If $Q$ is ergodic, we call the process AMS ergodic.
\end{definition}

\begin{proposition}
An AMS ergodic process satisfies an equation similar to \eqref{spce}. Namely, for any $B \in \sB(\X)$ it holds that%
\begin{equation}\label{def_eq}
  \mu\Bigl(\Bigl\{x \in\Sigma : \lim_{T \to \infty}\frac{1}{T}\sum_{t=0}^{T-1}\1_B(x_t) = Q(B)\Bigr\}\Bigr) = 1.%
\end{equation}
\end{proposition}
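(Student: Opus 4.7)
The plan is to apply the pointwise ergodic theorem to the stationary mean $\tilde{Q}$ and then transfer the resulting almost-sure convergence from $\tilde{Q}$ to $\mu$. The transfer step will rest on a single standard fact: if $\mu$ is AMS with stationary mean $\tilde{Q}$, then $\mu(A) = \tilde{Q}(A)$ for every $\theta$-invariant set $A \in \sB(\Sigma)$. That fact is immediate from the definition, since $\theta^{-t}(A) = A$ forces the Ces\`aro averages $\frac{1}{T}\sum_{t=0}^{T-1}\mu(\theta^{-t}(A))$ to be constantly $\mu(A)$, and this must equal the AMS limit $\tilde{Q}(A)$.

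Concretely, I would fix $B \in \sB(\X)$, set $f(x) := \1_B(x_0)$ so that $f \circ \theta^t(x) = \1_B(x_t)$, and denote by $E$ the ``good set'' appearing inside the probability in \eqref{def_eq}. The first step is to check that $E$ is $\theta$-invariant. Writing $y = \theta x$, the Birkhoff average of $\1_B$ along $y$ differs from the Birkhoff average along $x$ by $\frac{1}{T}(\1_B(x_T) - \1_B(x_0))$, which tends to $0$ as $T \to \infty$; hence $x \in E$ if and only if $\theta x \in E$, so $\theta^{-1}(E) = E$. Next, since $\tilde{Q}$ is stationary and, by hypothesis, ergodic, the pointwise ergodic theorem applied to $(\Sigma,\sB(\Sigma),\tilde{Q},\theta)$ with $f \in L^1(\tilde{Q})$ yields
\begin{equation*}
  \frac{1}{T}\sum_{t=0}^{T-1} f \circ \theta^t \xrightarrow[T \rightarrow \infty]{\tilde{Q}\text{-a.s.}} \int f \diff \tilde{Q} = \tilde{Q}(\{x \in \Sigma : x_0 \in B\}) = Q(B),
\end{equation*}
the last equality following because $Q$ is the projection of $\tilde{Q}$ onto the zeroth coordinate. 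Thus $\tilde{Q}(E) = 1$, and combining this with the invariance-agreement observation from the first paragraph gives $\mu(E) = \tilde{Q}(E) = 1$, which is precisely \eqref{def_eq}.

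The main obstacle is essentially conceptual rather than technical: one must recognize that the set $E$ is shift-invariant, since this is what unlocks the transfer from a $\tilde{Q}$-almost sure statement to a $\mu$-almost sure statement through the invariance-agreement property of AMS measures. The alternative, namely trying to apply an ergodic theorem directly under $\mu$, fails because $\mu$ itself need not be stationary. Once the invariance of $E$ is in hand, the argument reduces cleanly to an invocation of the pointwise ergodic theorem on the stationary ergodic system $(\Sigma,\sB(\Sigma),\tilde{Q},\theta)$.
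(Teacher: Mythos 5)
Your proof is correct and follows essentially the same route as the paper: apply the pointwise ergodic theorem under the stationary ergodic mean $\tilde{Q}$ to get $\tilde{Q}(E)=1$, observe that $E$ is $\theta$-invariant, and use the fact that an AMS measure agrees with its stationary mean on invariant sets to conclude $\mu(E)=1$. The paper cites Gray's book for the invariance of $E$ and the agreement on invariant sets, whereas you spell both out directly, but the argument is the same.
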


\begin{proof}
Let us fix a $B \in \sB(\X)$. By stationarity, we can project $Q$ to the space $\X$. By a slight abuse of notation, we also denote the projected measure by $Q$. We define%
\begin{equation*}
  F \coloneqq \Bigl\{ x \in \Sigma : \lim_{T \to \infty}\frac{1}{T}\sum_{t=0}^{T-1}\1_B(x_t) = Q(B) \Bigr\}.%
\end{equation*}
From the ergodicity assumption on $Q$, it follows that $Q(F) = 1$. Also, $F$ is invariant under $\theta$ from which we obtain that $\mu(F) = 1$ (see \cite[Lem.~7.5 and Eq.~(7.22)]{gray2009probability}). See also \cite[Thm.~7.6]{gray2009probability}.
\end{proof}

\section{Information transmission rate conditions for ergodicity}\label{sec:boc}

We now state the main contributions of this paper. Proofs can be found in the next section. Consider the system%
\begin{align}	\label{mainsys}
  x_{t+1} = f(x_t,w_t) + u_t%
\end{align}
where $x_t$ and $u_t$ are $\R^N$-valued for some $N\in\N$ and $w_t$ takes values in a standard probability space $\W$. For a fixed $w \in \W$, let us denote the map $x \mapsto f(x,w)$ by $f_w$. Suppose also that the following holds:%
\begin{enumerate}
\item[(A1)] The map $f:\R^N \times \mathds{W} \rightarrow \R^N$ is Borel measurable.%
\item[(A2)] The noise process $(w_t)_{t\in\Z_+}$ is i.i.d. By abuse of notation, $\nu$ denotes both the law of any individual $w_t$, as well as the process measure.%
\item[(A3)] The map $f_w:\R^N \rightarrow \R^N$ is $C^1$ and injective for any $w \in \mathds{W}$.%
\item[(A4)] The initial state $x_0$ is random and independent of the noise process. We write $\pi_0$ for the associated probability measure.%
\item[(A5)] The measure $\pi_0$ is absolutely continuous with respect to the $N$-dimensional Lebesgue measure $m$, and its density (which exists by the Radon-Nikodym theorem) is bounded.%
\item[(A6)] There is a constant $c>0$ with $|\det Df_w(x)| > c$ for all $x \in \R^N$ and $w \in \mathds{W}$.%
\end{enumerate}

We write $(\Omega,\sF,P)$ for the probability space on which both $x_0$ and $w_t$ are modeled.%

We assume that the system is controlled over a possibly noisy communication channel as depicted in Fig.~\ref{LLL1}. The channel has a finite input alphabet $\sM$ and a finite output alphabet $\sM'$. The channel input $q_t$ at time $t$ is generated by a function $\gamma^e_t$ so that $q_t = \gamma^e_t(x_{[0,t]},q'_{[0,t-1]})$. The channel maps $q_t$ to $q'_t$ in a stochastic fashion so that $P(q_t' \in \cdot |q_t,q_{[0,t-1]},q'_{[0,t-1]}) = P(q'_t \in \cdot |q_t)$ is a conditional probability measure on $\sM'$ for all $t \in \Z_+$, for every realization $q_t,q_{[0,t-1]},q'_{[0,t-1]}$. The controller, upon receiving the information from the channel, generates its decision at time $t$, also causally: $u_t = \gamma_t^c(q'_{[0,t]})$. Any coding and control policy of this kind is called \emph{causal}. If the channel is noiseless, we have $\sM = \sM'$ and the channel capacity reduces to $C = \log|\sM|$. If the channel is noisy and memoryless, feedback does not increase its capacity, see Section \ref{infoBackground}.

\begin{figure}[h]
\begin{center}
\includegraphics[height=2.5cm,width=8.5cm]{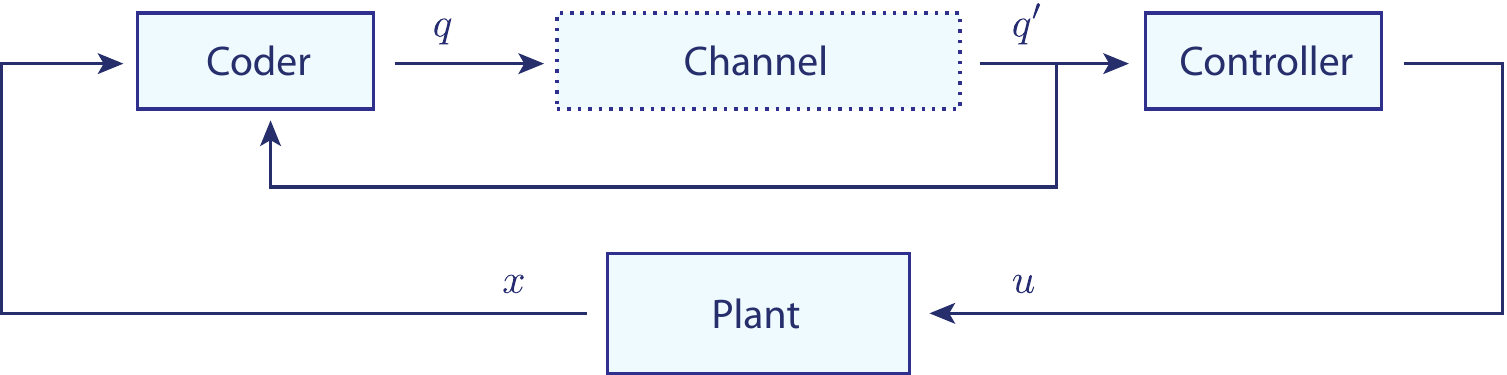}
\caption{Control of a system over a noisy channel with feedback\label{LLL1}}
\end{center}
\end{figure}

\begin{theorem}\label{thm;NewTheorem1}
Consider system \eqref{mainsys} satisfying assumptions (A1)--(A6). Suppose the system is controlled over a discrete noiseless channel of capacity $C$ and a coding and control policy achieves that the state process is AMS ergodic with asymptotic mean $Q$. Then the capacity must satisfy%
\begin{equation*}%
  \int \int \log|\det Df_w(x)|\, \diff Q(x)\diff \nu(w) \leq C.%
\end{equation*}
\end{theorem}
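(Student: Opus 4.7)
The plan is to prove $D \le C$ by combining a pointwise ergodic theorem for the log-Jacobians along the state-noise process (giving a lower bound on typical volume growth) with a volume bound coming from counting control sequences: over a window of $T$ time steps, a noiseless channel of capacity $C$ produces at most $2^{CT}$ distinct causal control trajectories. I would first show that almost surely
\begin{equation*}
\liminf_{T\to\infty}\frac{1}{T}\sum_{t=0}^{T-1}\log|\det Df_{w_t}(x_t)| \ge D := \int\!\!\int \log|\det Df_w(x)|\,\diff Q(x)\,\diff\nu(w).
\end{equation*}
Decompose $\log|\det Df_{w_t}(x_t)| = g_x(x_t) + M_t$ with $g_x(x):=\int\log|\det Df_w(x)|\,\diff\nu(w)$ and $M_t := \log|\det Df_{w_t}(x_t)| - g_x(x_t)$. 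Since $x_t$ is $\mathcal{F}_{t-1}$-measurable for $\mathcal{F}_t:=\sigma(x_0,w_0,\ldots,w_t)$ while $w_t$ is independent of $\mathcal{F}_{t-1}$, the sequence $\{M_t\}$ is a martingale difference, and a strong law (after truncation) gives $T^{-1}\sum_t M_t\to 0$ almost surely; the AMS ergodic theorem applied to positive truncations of $g_x$ (using the lower bound $\log c$ from (A6)) followed by monotone convergence yields the desired Cesaro lower bound.

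For the volume bound, fix a compact $K\subset\R^N$ with $Q(K)$ close to $1$ by inner regularity. For each noise trajectory $w_{[0,T-1]}$ the causal encoder-controller pair produces at most $2^{CT}$ distinct control sequences, partitioning $\R^N$ into measurable cells $A_i(w)$ on each of which the map $\Phi_T^i\colon x_0\mapsto x_T$ is a composition of the injective $C^1$ maps $f_{w_t}$ with additive shifts, hence a $C^1$ injection with Jacobian $\prod_t Df_{w_t}(x_t(x_0))$. Restricting to $B_i(w):=A_i(w)\cap\{x_T\in K\}$, the change-of-variables formula gives $\int_{B_i(w)}\prod_t|\det Df_{w_t}(x_t)|\,\diff m\le m(K)$; summing over $i$, integrating over $w$, and converting the Lebesgue bound to an expectation bound via $\diff\pi_0 \le \rho_0\,\diff m$ (assumption (A5)) yields
\begin{equation*}
\mathbb{E}\Bigl[\1_{\{x_T\in K\}}\,2^{\,\sum_{t=0}^{T-1}\log|\det Df_{w_t}(x_t)|}\Bigr] \le \rho_0\cdot 2^{CT}\cdot m(K).
\end{equation*}

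The two ingredients are combined by contradiction: suppose $D>C$ and pick $\alpha\in(C,D)$. Markov's inequality applied to the volume bound gives
\begin{equation*}
P\Bigl(\sum_{t=0}^{T-1}\log|\det Df_{w_t}(x_t)|\ge \alpha T,\ x_T\in K\Bigr) \le \rho_0\cdot m(K)\cdot 2^{(C-\alpha)T}\xrightarrow[T\to\infty]{} 0,
\end{equation*}
while the Cesaro lower bound yields $P(\sum_t\log|\det Df_{w_t}(x_t)|\ge\alpha T)\to 1$. The AMS property produces a subsequence $T_n\to\infty$ with $P(x_{T_n}\in K)\ge Q(K)/2$ (otherwise $\limsup T^{-1}\sum_{t<T}P(x_t\in K)\le Q(K)/2<Q(K)$, contradicting AMS), and a union bound on the two events then forces $Q(K)/2\le o(1)$, contradicting $Q(K)>0$ once $K$ is taken large enough. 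Hence $D\le C$.

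The principal obstacle I anticipate is the almost-sure Cesaro lower bound in the first step: the AMS ergodic theorem from the excerpt is stated for indicators and extends directly only to bounded or non-negative functions, whereas $\log|\det Df_w|$ is only integrable against $Q\otimes\nu$ and bounded below by $\log c$. The martingale strong law for $\{M_t\}$ must thus be invoked under possibly unbounded increments, which I would handle by truncating at level $M$, applying the bounded-increment version, and passing to $M\to\infty$ via Borel-Cantelli together with the tail integrability of the increments; verifying these tail conditions is the delicate technical piece of the argument.
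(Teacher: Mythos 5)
Your proposal is essentially correct and takes a genuinely different route from the paper. The paper's proof works through a machinery of generalized \emph{stabilization entropy}: it introduces $(T,B,D,\rho,R)$-spanning sets of control sequences for finite partitions $\{B_k\}$ of a state-space set and $\{D_l\}$ of $\mathds{W}$, proves via the AMS ergodic theorem that the associated entropy $h(B,D,\rho,R_\epsilon)$ is bounded by $C$ (Lemma~\ref{myNewLemma}), and then derives the capacity bound by tracking volumes of sets of the form $A_t(u,w,\Lambda)$ for all admissible time-allocations $\Lambda$, finally taking a supremum over partition refinements and an exhaustion $B\uparrow\R^N$ to recover the double integral. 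Your route replaces the entire stabilization-entropy and partition-refinement apparatus with two clean moves: (i) an almost-sure Cesaro lower bound $\liminf_T T^{-1}\sum_t \log|\det Df_{w_t}(x_t)|\ge D$, obtained by splitting the summand into $g_x(x_t)$ (handled by the AMS pointwise ergodic theorem, extended to $L^1$ via the Gray lemma the paper cites) plus a martingale difference (handled by a strong law), and (ii) a single change-of-variables inequality $\mathbb{E}[\1_{\{x_T\in K\}}2^{\sum_t\log|\det Df_{w_t}(x_t)|}]\le\rho_0\,m(K)\,2^{CT}$ coming from the at most $2^{CT}$ control cells per noise path; Markov plus the AMS subsequence argument then closes the contradiction. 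This is more direct than the paper's Riemann-sum-over-partitions approach and gets the integral formula in one shot rather than as a supremum of lower step functions; what the paper's machinery buys is uniformity across the family of spanning sets, which is what lets the paper adapt the same framework to the noisy-channel Theorem~\ref{thm;NewTheorem2}, where your direct control-sequence counting would not go through.

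Two details deserve care if you were to flesh this out. First, your martingale-plus-ergodic decomposition is valid because for a noiseless channel the state $x_t$ is $\sigma(x_0,w_0,\ldots,w_{t-1})$-measurable (the controls are deterministic functions of $x_0,w_{[0,t-1]}$), so the conditional expectation of $\log|\det Df_{w_t}(x_t)|$ given the past is exactly $g_x(x_t)$; this would break down under randomized policies or noisy channels. Second, as you note, $\log|\det Df_w(x)|$ is only bounded below (by $\log c$), so the martingale SLLN needs the truncation step. The cleanest version: set $g^M_{w,x}:=\min(\log|\det Df_w(x)|,M)$ and $g^{(M)}_x(x):=\int g^M_{w,x}\,\diff\nu(w)$; then $g^M_{w_t,x_t}-g^{(M)}_x(x_t)$ is a uniformly bounded martingale difference, so $T^{-1}\sum_t(g^M_{w_t,x_t}-g^{(M)}_x(x_t))\to 0$ a.s., while $T^{-1}\sum_t g^{(M)}_x(x_t)\to\int g^{(M)}_x\,\diff Q$ a.s.\ by the AMS ergodic theorem (the projected $\theta$-invariant convergence set has full $\mu$-measure as in Proposition~2.3). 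Since $\log|\det Df_{w_t}(x_t)|\ge g^M_{w_t,x_t}$ and the full-measure sets can be intersected over integer $M$, monotone convergence then gives $\liminf\ge D$ a.s., no Borel--Cantelli needed. With that detail filled in, the argument is complete and correct.
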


Our second main theorem relaxes the condition of the channel being noiseless. On the other hand, the class of nonlinear systems considered is more restrictive.%

\begin{theorem}\label{thm;NewTheorem2}
Consider the scalar system%
\begin{equation*}
  x_{t+1} = f(x_t,w_t) + u_t%
\end{equation*}
satisfying assumptions (A1)--(A5). Additionally, suppose that the following holds:%
\begin{enumerate}[(i)]
\item $|f_w'(x)| \geq 1$ for every $x \in \R$.
\item The support of $\pi_0$ is a compact interval $K \subseteq \R$.
\item The essential infimum and supremum of the density of $\pi_0$, denoted by $\rho_{\min}$ and $\rho_{\max}$, respectively, satisfy $0 < \rho_{\min} \leq \rho_{\max} < \infty$.%
\end{enumerate}
Suppose that the system is controlled over a discrete memoryless channel with feedback of capacity $C$ and a causal coding and control policy results in the state process being AMS ergodic with asymptotic mean $Q$. Then the channel capacity must satisfy%
\begin{equation}\label{Theorem2Ineq}
  \int \int \log|f_w'(x)| \diff Q(x) \diff \nu(w) \leq C.%
\end{equation}
\end{theorem}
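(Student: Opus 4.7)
The plan is to mirror the noiseless-channel proof of Theorem \ref{thm;NewTheorem1} while replacing its trivial counting bound $|\sM|^\tau$ by a genuine Shannon-capacity bound. The three ingredients are (a) the pointwise ergodic theorem, used to extract $H := \int\int \log|f_w'(x)|\,\diff Q(x)\,\diff \nu(w)$ as an almost-sure time average, (b) a volume-growth lower bound on the number of distinct control sequences needed to trap a positive-probability set of initial conditions in a compact region, and (c) a Fano-type upper bound on that same count in terms of $C$.

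First I would apply the pointwise ergodic theorem to the joint process $(x_t,w_t)$: its AMS limit factorizes as $Q \otimes \nu$ by the i.i.d.\ independence of the noise, and applied to the function $(x,w) \mapsto \log|f_w'(x)|$ (assumed in $L^1$, via continuity on compacts together with the eventual confinement implied by AMS ergodicity), this yields $\tau^{-1}\sum_{s=0}^{\tau-1}\log|f_{w_s}'(x_s)| \to H$ almost surely. Fix $\epsilon > 0$, a compact set $K_0 \supseteq K$ with $Q(K_0)$ close to $1$, and define a \emph{good event} $G_\tau$ consisting of sample paths whose empirical occupancy of $K_0$ on $[0,\tau]$ exceeds $1-\alpha$ and whose Lyapunov time-average lies within $\epsilon$ of $H$; AMS-ergodic convergence combined with the ergodic theorem guarantees $P(G_\tau) \ge \delta$ for some $\delta > 0$ and all large $\tau$.

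For each realization of the noise $w$ and an auxiliary channel-randomness variable $\eta$, the deterministic map $x_0 \mapsto U^\tau := u_{[0,\tau-1]}$ is a step function on $K$, partitioning it into cells on each of which $x_0 \mapsto x_\tau$ is $C^1$ with derivative $\prod_{s=0}^{\tau-1}f_{w_s}'(x_s)$. On cells meeting $G_\tau$ this product is at least $2^{\tau(H-\epsilon)}$ by construction, while (i), (ii), and the fraction-of-time estimate from $G_\tau$ pin $x_\tau$ on a good cell to a bounded region whose size is independent of $\tau$. Using (iii) to convert Lebesgue measure into $\pi_0$-measure, a change-of-variables inequality forces the number $R(w,\eta,\tau)$ of cells intersecting $G_\tau$ to satisfy $\log R(w,\eta,\tau) \ge \tau(H-\epsilon) - O(1)$ on a positive-measure set of $(w,\eta)$. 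The final step is the capacity upper bound: because the channel is a DMC with feedback, whose operational capacity equals the feedback-free Shannon capacity, a Fano/typical-set argument applied to the induced source-to-decoder channel $X_0 \to U^\tau$ bounds $\log R(w,\eta,\tau) \le \tau C + o(\tau)$. Combining the two bounds gives $H - \epsilon \le C$, and $\epsilon \downarrow 0$ finishes the argument. I expect the principal obstacle to be precisely this last conversion — extracting a clean cell-count bound from Shannon capacity within the volume-growth framework, in a way that survives averaging over the channel noise and the noise process, and that exploits (iii) to rule out degenerate partitions — which is presumably where the paper's modified invariance/stabilization-entropy machinery plays its essential role.
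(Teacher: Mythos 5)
Your geometric setup --- ergodic time-averages producing the Lyapunov rate $H$, confinement to a compact set, and a change-of-variables/volume-growth estimate showing that the number of control sequences must grow at rate at least $H-\epsilon$ --- tracks the paper's argument reasonably well. But the final step, where you assert that ``a Fano/typical-set argument applied to the induced source-to-decoder channel $X_0 \to U^\tau$ bounds $\log R(w,\eta,\tau) \le \tau C + o(\tau)$,'' contains a genuine gap, and it is precisely where the paper's machinery is doing the hard work. For a noisy channel the number of \emph{realizable} control sequences $u_{[0;\tau-1]}$ is bounded only by $|\sM'|^\tau$, which can vastly exceed $2^{\tau C}$; the capacity $C$ constrains the rate of \emph{reliably distinguishable} messages, not the raw count of receiver outputs. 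So the inequality $\log R \le \tau C + o(\tau)$ you are relying on is false as stated, and no direct Fano argument recovers it.

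What the paper actually does is split the argument into two cases around the quantity $\tilde R := \limsup_T \frac1T\log|\tilde U_T|$, the exponential growth rate of the \emph{used} control sequences. In Case~1 ($\tilde R$ below the volume-growth threshold), a union bound shows that the set of initial conditions that can be steered to satisfy the empirical-occupancy constraints has $\pi_0$-measure tending to $0$, contradicting the almost-sure confinement from the ergodic theorem; note this case is purely measure-theoretic and uses no channel capacity at all. In Case~2 ($\tilde R$ above the threshold, hence $\tilde R > C$ under the contradiction hypothesis), the paper explicitly \emph{constructs} a feedback channel code that transmits the bin index of $x_0$ and shows the decoding error does not tend to $1$, contradicting the \emph{strong converse} (Kemperman, \cref{thm;strongconv}), not Fano. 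Making this code work requires extracting a disjoint sub-collection of bins, and --- crucially --- handling the non-uniformity of $\pi_0$ over those bins: the strong converse is stated for a uniformly distributed message, and the paper bridges this by an optimal-transport coupling (\cref{tvlemma}) between the bin-index law and the uniform law, together with the density bounds in assumption~(iii). Your proposal has no analogue of either the two-case dichotomy or the coupling step, and without them the capacity bound does not close.
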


The first theorem above is a counterpart to \cite[Thm.~5.1]{kawan2019invariance}, where it was shown for systems of the form $x_{t+1} = f(x_t) + w_t + u_t$, without the ergodicity assumption on the AMS measure, that for any Borel set $B$ of finite Lebesgue measure%
\begin{equation*}
  Q(B)\inf\limits_{x\in B}\log|\det Df(x)| \leq C%
\end{equation*}
must be satisfied. The second theorem above is a counterpart to \cite[Thm.~7.1]{kawan2019invariance} without the ergodicity assumption on the AMS measure.%

To prove \cref{thm;NewTheorem1} and \cref{thm;NewTheorem2}, the stabilization entropy introduced in \cite{kawan2019invariance} must be generalized and a technical lemma proven. This is carried out in the next section. Before doing this, we provide a discussion of the theorems.%

Observe that our lower bound on channel capacity is $\leq 0$ (and thus vacuous) if $|\det Df_w(x)| \leq 1$ for all $(x,w)$. Recall that the determinant of a square matrix represents the volume of the unit cube after it is acted on by the matrix. As such, \cref{thm;NewTheorem1} is only interesting if the system is volume-expansive on some regions of the state space. This is intuitive, since if $f$ is nowhere volume-expansive, it may be possible for the uncontrolled system to have desirable stability properties.%

The results obtained here are consistent with those obtained using information-theoretic techniques in \cite{yuksel2016stationary}, but are in fact a strict refinement. A similar converse result on channel capacity was obtained in \cite{yuksel2016stationary} under the stronger stability criterion of positive Harris recurrence of the closed-loop stochastic process. It reads as follows:%

\begin{theorem} \label{thm;YukselTh}
(\cite[Thm.~4.2]{yuksel2016stationary}) Consider the system%
\begin{equation*}
  x_{t+1} = f(x_t,w_t) + u_t%
\end{equation*}
and suppose that the following assumptions hold:%
\begin{enumerate}[(i)]
\item For any fixed $w$, the function $f_w:\R^N \rightarrow \R^N$ is a $C^1$-diffeomorphism.%
\item There exist $L,M \in \R$ such that $L \leq \log|\det Df_w(x)| \leq M$ for all $x,w \in \R^N$.%
\end{enumerate}
Suppose that a stationary coding and control policy (see \cite{yuksel2016stationary} for a precise definition) is adopted so that under this policy%
\begin{enumerate}[(i)]
\item the Markovian system state and encoder state is positive Harris recurrent (which implies the existence of a unique invariant measure).%
\item $\limsup_{t \rightarrow \infty}h(x_t)/t \leq 0$ \nicolasg{where $h(\cdot)$ denotes the differential entropy}.%
\end{enumerate}
Then the channel capacity must satisfy%
\begin{equation*}
	\int \int \log|\det Df_w(x)|\diff Q(x) \diff \nu(w) \leq C.%
\end{equation*}
\end{theorem}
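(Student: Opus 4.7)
The plan is to give an information-theoretic proof based on differential entropy. The central identity is the change-of-variables formula: for a $C^1$-diffeomorphism $g$, one has $h(g(X)) = h(X) + \mathbb{E}[\log|\det Dg(X)|]$, and the identity extends conditionally when $g$ depends on variables independent of $X$. Together with the chain rule for mutual information, this lets me track the growth of the conditional entropy $h(x_t \mid q'_{[0,t-1]})$ and compare its rate to the Shannon capacity $C$.

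First, I would derive a one-step inequality. Since $u_t = \gamma^c_t(q'_{[0,t]})$ is a measurable function of $q'_{[0,t]}$, translation-invariance of differential entropy yields $h(x_{t+1} \mid q'_{[0,t]}) = h(f_{w_t}(x_t) \mid q'_{[0,t]})$. The noise $w_t$ is independent of $(x_t, q'_{[0,t]})$, because these are built from $x_0$, the past noise $w_{[0,t-1]}$, and the channel noise up to time $t$. Conditioning further on $w_t$ only decreases entropy, and applying the change-of-variables formula pointwise in $w_t$ gives
\begin{align*}
  h(x_{t+1} \mid q'_{[0,t]}) \;\geq\; h(x_t \mid q'_{[0,t]}) + \mathbb{E}\bigl[\log|\det Df_{w_t}(x_t)|\bigr],
\end{align*}
with both sides finite by the uniform bound $L \leq \log|\det Df_w(x)| \leq M$ in hypothesis (ii).

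Using $h(x_t \mid q'_{[0,t]}) = h(x_t \mid q'_{[0,t-1]}) - I(x_t; q'_t \mid q'_{[0,t-1]})$ and telescoping from $t=0$ to $T-1$, I would obtain
\begin{align*}
  h(x_T \mid q'_{[0,T-1]}) \;\geq\; h(x_0) + \sum_{t=0}^{T-1} \mathbb{E}\bigl[\log|\det Df_{w_t}(x_t)|\bigr] - \sum_{t=0}^{T-1} I(x_t; q'_t \mid q'_{[0,t-1]}).
\end{align*}
Since $q_t$ is a function of $(x_{[0,t]}, q'_{[0,t-1]})$ and $q'_t$ is conditionally independent of $x_t$ given $(q_t, q'_{[0,t-1]})$, the data-processing inequality gives $I(x_t; q'_t \mid q'_{[0,t-1]}) \leq I(q_t; q'_t \mid q'_{[0,t-1]})$, and the standard memoryless-with-feedback capacity characterization bounds the latter sum by $T C$. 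Dividing by $T$, using $h(x_T \mid q'_{[0,T-1]}) \leq h(x_T)$ and the hypothesis $\limsup_T h(x_T)/T \leq 0$, positive Harris recurrence (forcing the joint $(x_t, w_t)$ process to be asymptotically distributed as $Q \otimes \nu$) together with the ergodic theorem and bounded convergence yields
\begin{align*}
  \lim_{T \to \infty} \frac{1}{T} \sum_{t=0}^{T-1} \mathbb{E}\bigl[\log|\det Df_{w_t}(x_t)|\bigr] \;=\; \int \int \log|\det Df_w(x)| \diff Q(x)\diff \nu(w),
\end{align*}
and rearranging gives the desired capacity lower bound.

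The step I expect to be most delicate is the directed-information bookkeeping: rigorously bounding $\sum_{t=0}^{T-1} I(q_t; q'_t \mid q'_{[0,t-1]})$ by $T C$ requires the precise Shannon-capacity-with-feedback characterization for a memoryless channel together with the channel factorization $P(q'_t \mid q_t, q_{[0,t-1]}, q'_{[0,t-1]}) = P(q'_t \mid q_t)$ built into the model, and each conditional-independence step needs careful verification. A secondary subtlety is justifying the change-of-variables formula when $x_t$ may a priori lack a conditional density given $q'_{[0,t]}$; assumption (A5) combined with the $C^1$-diffeomorphism hypothesis propagates absolute continuity forward in time, so the relevant conditional densities exist almost surely and the identity is valid.
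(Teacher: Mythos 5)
The paper states this theorem verbatim as a citation of \cite[Thm.~4.2]{yuksel2016stationary} and gives no proof of its own; it appears solely for the comparison in Section~\ref{sec:boc}. Your argument reproduces the information-theoretic route that the paper explicitly attributes to that reference (``information-theoretic techniques \ldots\ building on directed mutual information''): track $h(x_t \mid q'_{[0,t-1]})$ via the differential-entropy change-of-variables identity, telescope against the conditional mutual informations, invoke the single-letter DMC-with-feedback bound to cap the accumulated information by $TC$, and close with the sublinear entropy-growth hypothesis together with the Harris-recurrent Ces\`aro limit of the state marginals. As far as the paper's own description lets one judge, this is the right mechanism for the cited result, and it contrasts squarely with the method the paper develops for \cref{thm;NewTheorem1} and \cref{thm;NewTheorem2}: a stochastic volume-growth / stabilization-entropy argument built on the pointwise ergodic theorem, which avoids differential entropy entirely and thereby drops both the stationary-Markov-policy restriction and the a~priori $\limsup_t h(x_t)/t \le 0$ hypothesis, at the cost (in the noisy-channel case) of a narrower system class. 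Two technical points in your sketch should be made explicit rather than implicit: the telescoped inequality leaves $h(x_0)$ on the right-hand side, so you need $h(x_0) > -\infty$ (finite differential entropy of the initial law); and the data-processing step should be routed as
\begin{equation*}
  I(x_t;q'_t\mid q'_{[0,t-1]}) \;\le\; I\bigl(x_{[0,t]};q'_t\mid q'_{[0,t-1]}\bigr) \;=\; I(q_t;q'_t\mid q'_{[0,t-1]}) \;\le\; C,
\end{equation*}
using that $q_t$ is a deterministic function of $(x_{[0,t]},q'_{[0,t-1]})$ and that $q'_t$ is conditionally independent of $(x_{[0,t]},q'_{[0,t-1]})$ given $q_t$ by the memoryless channel factorization, since it is $x_{[0,t]}$ (not $x_t$ alone) that determines the channel input.
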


Let us now compare \cref{thm;NewTheorem1} and \cref{thm;NewTheorem2} with \cref{thm;YukselTh}. \cref{thm;NewTheorem1} is more general in the sense that it applies to arbitrary causal coding and control policies, not just Markov ones. Moreover, it does not require the assumption of sublinear growth of the differential entropy of the state process. \cref{thm;YukselTh} assumes that the state process is positive Harris recurrent which implies unique ergodicity, while \cref{thm;NewTheorem1} only assumes ergodicity of the AMS measure. On the other hand, compared with \cref{thm;NewTheorem2}, \cref{thm;YukselTh} considers a more general class of channels (involving memory) as well as systems taking values in higher dimensions.%

\section{Proofs}\label{sec:pfs}

In this section, we prove our two main theorems. We begin by generalizing the notion of stabilization entropy and proving a technical lemma.%

\subsection{Generalizing stabilization entropy}

Consider system \eqref{mainsys} with a fixed (open-loop) control sequence $u \coloneqq (u_t)_{t \in \Z_+}$, a noise realization $w \coloneqq (w_t)_{t \in \Z_+}$ and an initial state $x_0 \in \R^N$. For such a setup, the trajectory $x \coloneqq (x_t)_{t \in \Z_+} \in (\R^N)^{\Z_+}$ of the state is uniquely determined. Let us denote this trajectory by $\phi(\cdot,x_0,u,w)$ so that for any $t \in \Z_+$, $x_t = \phi(t,x_0,u,w)$.%

We want to find a subset of control sequences that allow to render certain subsets of the state space invariant in a probabilistic sense. This leads to the next definitions of spanning sets and stabilization entropy for finite collections of subsets of $\R^N$ and $\mathds{W}$, respectively, which generalize similar notions in \cite{kawan2019invariance}, where a single set was considered.%

\begin{definition}
Let $B \in \sB(\R^N)$ and $D \in \sB(\mathds{W})$ be finite disjoint unions of Borel sets $B_1,\ldots,B_n$ and $D_1,\ldots,D_m$, respectively. Let also $R$ denote a collection of numbers $r_{k,l} \in [0,1]$ for $k \in \{1,\ldots,n\}$ and $l \in \{1,\ldots,m\}$ satisfying%
\begin{equation*}
  1 - r := \sum_{k=1}^n \sum_{l=1}^m (1 - r_{k,l}) \in [0,1].%
\end{equation*}
Fix $T \in \N$ and $\rho \in (0,1)$. A set of control sequences $S \subseteq (\R^N)^T$ is called $(T,B,D,\rho,R)$-spanning if there exists $\tilde{\Omega} \in \sF$ such that the following conditions hold:%
\begin{itemize}
\item $P(\tilde{\Omega}) \geq 1-\rho$.%
\item For each $\omega \in \tilde{\Omega}$, there exists a control sequence $u \in S$ such that
\begin{equation*}
  \frac{1}{T}|\{t \in [0;T-1] : (\phi(t,x_0(\omega),u,w(\omega)),w_t(\omega)) \in B_k \times D_l \}| \geq 1 - r_{k,l}%
\end{equation*}
holds for all $k$ and $l$.
\end{itemize}
\end{definition}

Note that we abuse notation by calling a set $(T,B,D,\rho,R)$-spanning instead of $(T,(B_k)_{k=1}^{n},(D_l)_{l=1}^m,\rho,R)$-spanning. When doing so, there is the underlying assumption that the partitions of $B$ and $D$ are fixed. No confusion should arise, since we explicitly define the partitions whenever we use the definition.%

In the above definition, the fact that all random variables are modeled on a common probability space ensures that given $\omega$, the initial state and the noise sequence of length $T$ are deterministic. Intuitively speaking, a subset of control sequences of length $T$ is $(T,B,D,\rho,R)$-spanning if the probability that, for all $k,l$, we can maintain the state variable in $B_k$ and the noise variable in $D_l$ for at least $1 - r_{k,l}$ percent of the time, is at least $1 - \rho$. We want to use the size of spanning sets to quantify the difficulty of a control task, which leads to the next definition.%

\begin{definition}
For the system (\ref{mainsys}), we define the $(B,D,\rho,R)$-stabilization entropy by%
\begin{equation*}
  h(B,D,\rho,R) := \limsup_{T \to \infty}\frac{1}{T} \log s(T,B,D,\rho,R),%
\end{equation*}
where $s(T,B,D,\rho,R)$ denotes the smallest cardinality of a $(T,B,D,\rho,R)$-spanning set. We define this quantity to be $\infty$ if no or no finite spanning set exists.%
\end{definition}

It is obvious that finite $(T,B,D,\rho,R)$-spanning sets need not exist. As we will see however, they do exist in desired scenarios.%

The following lemma is instrumental to prove \cref{thm;NewTheorem1}.%

\begin{lemma}	\label{myNewLemma}
Consider system (\ref{mainsys}) with the assumptions of Theorem \ref{thm;NewTheorem1} (i.e., a coding and control policy exists over a noiseless channel of capacity $C = \log|\sM|$ which makes the state process AMS ergodic \nicolasg{with AMS mean $Q$). We recall that $Q$ is stationary and can be projected unambiguously to obtain a measure on $\sB(\R^N)$. Abusing notation we also denote the projection by $Q$}. Let now%
\begin{itemize}
\item $B \coloneqq \bigsqcup_{k=1}^{n} B_k \in \sB(\R^N)$ and $D \coloneqq \bigsqcup_{l=1}^{m} D_l \in \sB(\mathds{W})$ be finite disjoint unions of Borel sets,%
\item $\rho \in (0,1)$ be arbitrary.%
\end{itemize}
Next, define the sequence of numbers $R_\epsilon \coloneqq (r_{k,l})_{1 \leq k \leq n, 1 \leq l \leq m}$, where%
\begin{align*}
	r_{k,l} := \begin{cases}
      (1+\epsilon)(1 - Q(B_k)\nu(D_l)) & \text{ if } Q(B_k)\nu(D_l) \in (0,1) \\
      1 & \text{ if } Q(B_k)\nu(D_l) = 0	\\
      \epsilon & \text{ if } Q(B_k)\nu(D_l) = 1
    \end{cases}
\end{align*}
and observe that for $\epsilon > 0$ small enough, the following conditions are satisfied:%
\begin{enumerate}[(i)]
\item $1 - r \coloneqq \sum_{k=1}^{n}\sum_{l=1}^m(1 - r_{k,l}) \in [0,1]$.%
\item $1 - (1 + \epsilon)(1 - Q(B_k)\nu(D_l)) \in (0,1) $ for all $k,l$ with $Q(B_k)\nu(D_l) \in (0,1)$.%
\end{enumerate}
Thus, for such a small $\epsilon$, the generalized stabilization entropy $h(B,D,\rho,R_\epsilon)$ is well-defined. (Of course, $r$ and the $r_{k,l}$'s are $\epsilon$-dependent, but we drop this from the notation.) Then for all $\epsilon > 0$ further small enough the capacity must satisfy%
\begin{equation}\label{eq_centbound}
  h(B,D,\rho,R_\epsilon) \leq C.%
\end{equation}
\end{lemma}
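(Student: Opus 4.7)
The plan is to build, for each $T \in \N$, a $(T,B,D,\rho,R_\epsilon)$-spanning set of size at most $2^{CT}$ directly from the channel's codewords, and then verify the required occupancy bounds via the pointwise ergodic theorem combined with a martingale-difference argument for the noise.

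First, I will construct the spanning set $S$. Because the channel is noiseless with alphabet of size $|\sM| = 2^C$, at each horizon $T$ there are at most $|\sM|^T = 2^{CT}$ possible output sequences $q'_{[0,T-1]}$. For each such sequence, the causal decoder $u_t = \gamma^c_t(q'_{[0,t]})$ produces a deterministic control sequence of length $T$, and I let $S$ collect all of them, so $|S| \leq 2^{CT}$. In the closed loop the realized sequence $u(\omega) = (u_t(\omega))_{0 \leq t < T}$ lies in $S$ for every $\omega$, and the forward recursion gives $\phi(t, x_0(\omega), u(\omega), w(\omega)) = x_t(\omega)$, so for each $\omega$ I can select $u = u(\omega) \in S$ as the candidate control sequence in the spanning-set definition.

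The main step is to show that for any $\delta > 0$ there exist $T_0 \in \N$ and $\tilde{\Omega} \in \sF$ with $P(\tilde{\Omega}) \geq 1 - \rho$ such that for every $T \geq T_0$, $\omega \in \tilde{\Omega}$, and every pair $(k,l)$,
\[
  \Bigl|\frac{1}{T}\sum_{t=0}^{T-1} \1_{B_k \times D_l}(x_t(\omega), w_t(\omega)) - Q(B_k)\nu(D_l)\Bigr| < \delta.
\]
The convergence of $\frac{1}{T}\sum_{t} \1_{B_k}(x_t)$ to $Q(B_k)$ is given directly by the AMS-ergodic proposition of Section \ref{sec:perlims} applied to the state process. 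To pass to the joint indicator, set $\sF_t := \sigma(x_0, w_0, \ldots, w_{t-1})$; because the channel is noiseless, a straightforward induction on the closed-loop recursion makes $x_t$ $\sF_t$-measurable, while $w_t$ is independent of $\sF_t$ by (A2) and (A4). Hence
\[
  E\bigl[\1_{B_k \times D_l}(x_t, w_t) \mid \sF_t\bigr] = \1_{B_k}(x_t) \nu(D_l),
\]
so $M_t := \1_{B_k \times D_l}(x_t, w_t) - \1_{B_k}(x_t) \nu(D_l)$ is a bounded martingale-difference sequence, and the martingale strong law yields $\frac{1}{T}\sum_{t=0}^{T-1} M_t \to 0$ a.s. Combining the two gives a.s.\ convergence of the joint empirical frequency to $Q(B_k)\nu(D_l)$, and uniformity over the finitely many pairs $(k,l)$ on a high-probability event follows from Egorov's theorem together with a finite union bound.

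Finally, for $\epsilon > 0$ sufficiently small, direct inspection of the three cases in the definition of $r_{k,l}$ shows that $1 - r_{k,l} \leq Q(B_k)\nu(D_l) - \delta$ once $\delta$ is chosen smaller than $\epsilon$ and smaller than $\epsilon\bigl(1 - Q(B_k)\nu(D_l)\bigr)$ across the finitely many $(k,l)$ in the interior case. Hence $S$ is $(T,B,D,\rho,R_\epsilon)$-spanning for all $T \geq T_0$, so $s(T,B,D,\rho,R_\epsilon) \leq 2^{CT}$, and \eqref{eq_centbound} follows on letting $T \to \infty$. The main obstacle I anticipate is this joint frequency convergence: the hypothesis only provides ergodicity for the state marginal, so it is essential that the noiseless-channel structure makes $x_t$ adapted to $\sF_t$ and enables the martingale-difference decomposition — exactly the structure that fails in the noisy-channel setting of \cref{thm;NewTheorem2} and that will require a different treatment there.
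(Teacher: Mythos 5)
Your proposal is correct and follows essentially the same high-level strategy as the paper: take $S$ to be the at most $|\sM|^T$ control sequences the noiseless-channel policy can generate, then certify the empirical-frequency bounds on a high-probability event via almost-sure convergence and let $T\to\infty$. Where you genuinely add something is the treatment of the joint empirical frequency. The paper moves from the state-marginal convergence (given by AMS ergodicity) and the noise SLLN to the joint statement $\frac{1}{T}\sum_t \1_{B_k}(x_t)\1_{D_l}(w_t)\to Q(B_k)\nu(D_l)$ a.s.\ with only a brief appeal to the pointwise independence of $x_t$ and $w_t$; two sequences of time-averages converging separately does not by itself imply the product averages converge, so this step deserves more care. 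Your martingale-difference decomposition $M_t = \1_{B_k\times D_l}(x_t,w_t)-\1_{B_k}(x_t)\nu(D_l)$, adapted to $\sF_t = \sigma(x_0,w_0,\dots,w_{t-1})$ using the noiseless-channel observation that $x_t$ is $\sF_t$-measurable, combined with the martingale SLLN, makes this rigorous and is a cleaner justification than the paper offers. The remaining differences are cosmetic: you invoke Egorov to get a single $\tilde\Omega$ with uniform convergence for $T\ge T_0$, whereas the paper builds explicit events $E_i^j$ and chooses $\tilde\Omega_T = E_{I_0}^T$ depending on $T$; and you fold the degenerate cases $Q(B_k)\nu(D_l)\in\{0,1\}$ into the $\delta$-versus-$\epsilon$ comparison rather than splitting them off into a separate case as the paper does (your inequality $1-r_{k,l}\le Q(B_k)\nu(D_l)-\delta$ is stated a bit loosely for the zero-measure case, but that case is vacuous since $1-r_{k,l}=0$ there, so the conclusion stands). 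Both choices are harmless. You also correctly observe that the martingale structure is exactly what the noiseless channel buys, which is why Theorem~\ref{thm;NewTheorem2} needs a different argument.
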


\begin{proof}
We distinguish two cases.%

\textbf{Case 1:} We can remove the trivial sets with zero measure from the collections $\{B_k\}$ and $\{D_l\}$ and thus assume that $Q(B_k)\nu(D_l) > 0$ for all $(k,l)$. Indeed, if a spanning set can be found for the new collections, it is still spanning for the original ones. If $Q(B_k)\nu(D_l) = 1$ for some $(k,l)$, all the other Cartesian products have measure zero and we can remove them from the collection. Hence, this case reduces to the analysis of a single set as worked out in \cite{kawan2019invariance}, where we considered AMS instead of AMS ergodicity as the control objective. Since AMS ergodicity implies AMS, and $h(B,D,\rho,R)$ reduces to the stabilization entropy notion used in \cite{kawan2019invariance} in case of a single set, the desired inequality follows.%

\textbf{Case 2:} We continue by considering the case where $Q(B_k)\nu(D_l) \in (0,1)$ for all $k,l$. Let $\epsilon > 0$ be small enough such that conditions (i) and (ii) are satisfied and $\epsilon < \rho$. We will show that for any such $\epsilon$ the claim holds.%

Let us denote the process measure by $\mu$, which is AMS by assumption. Let us consider some Borel set $C \subset \R^N$ and let $f:(\R^N)^{\Z_+} \rightarrow \R$ be defined by $f((x_t)_{t\in\Z_+}) := \1_{C}(x_0)$. It is obvious that this function is in $L^1((\R^N)^{\Z_+})$ (with either $Q$ or $\mu$ as the measure). Recalling our ergodicity assumption, the pointwise ergodic theorem tells us that%
\begin{equation*}
	\frac{1}{N} \sum_{j=0}^{N-1} f \circ \theta^j  \xrightarrow[N \rightarrow \infty]{Q-a.s.} \int f \diff Q = \int \1_{C}(x) \diff Q(x) = Q(C).%
\end{equation*}
Crucially however, the above convergence also happens $\mu$-almost surely (see (\ref{def_eq}) or \cite[Lem.~7.5]{gray2009probability}). Now, for any $V \in \sB(\W)$, it is clear by the i.i.d.~property that%

\begin{equation*}
  P\Bigl(\Bigl\{\omega \in \Omega : \lim_{T \to \infty} \frac{1}{T} \sum_{t=0}^{T-1}\1_V(w_t(\omega)) = \nu(V)\Bigr\}\Bigr) = 1.%
\end{equation*}
As such, noting that $x_t$ and $w_t$ are independent at each time step $t$, it follows that%
\begin{equation*}
  P\Bigl(\Bigl\{\omega \in \Omega : \lim_{T \to \infty} \frac{1}{T} \sum_{t=0}^{T-1}\1_{B_k}(x_t(\omega))\1_{D_l}(w_t(\omega)) = Q(B_k)\nu(D_l),\ \forall k,l\Bigr\}\Bigr) = 1.%
\end{equation*}
Let us denote the full measure set, where this convergence happens, by $\hat{\Omega}$.%

We continue by defining the events%
\begin{align*}
  E_i^j &\coloneqq \Bigl\{\omega \in \Omega :\ \Bigl| \frac{1}{T} \sum_{t=0}^{T-1} \1_{B_k}(x_t(\omega))\1_{D_l}(w_t(\omega)) - Q(B_k)\nu(D_l) \Bigr| < \frac{1}{i}\\
	&\qquad\qquad\qquad\qquad\qquad\qquad\qquad\qquad\qquad\qquad\qquad \forall k,l \text{ whenever } T \geq j \Bigr\},\\
  E &\coloneqq \bigcap_{i = 1}^{\infty} \bigcup_{j = 1}^{\infty}E_i^j.%
\end{align*}
It is not hard to see that $\hat{\Omega} \subseteq E$, hence $P(E) = 1$. Furthermore, observe that $E$ is an infinite intersection of ``decreasing'' sets (in the containment sense). Hence,%
\begin{equation*}
  P\bigg(\bigcup_{j = 1}^{\infty}E_i^j\bigg) = 1 \mbox{\quad for all\ } i \in \N.%
\end{equation*}
Let now $I_0$ be large enough such that%
\begin{equation*}
  \frac{1}{I_0} \leq \epsilon (1 - Q(B_k)\nu(D_l))  \mbox{\quad for all\ } k \in \{1,\ldots,n\},\ l \in \{1,\ldots,m\}%
\end{equation*}
and observe that $E_{I_0}^1 \subseteq E_{I_0}^2 \subseteq	E_{I_0}^3 \subseteq \cdots$. By continuity of probability, we have%
\begin{equation*}
	\lim_{j \rightarrow \infty} P(E_{{I_0}}^{j}) = P\bigg(\bigcup_{j = 1}^{\infty}E_{I_0}^j\bigg) = 1,%
\end{equation*}
and thus there exists $J_0$ such that $P(E_{I_0}^{j}) \geq 1 - \epsilon$ for all $j\geq J_0$. For an arbitrary $T \geq J_0$, we define the set of control sequences%
\begin{equation*}
  S_T := \{u_{[0;T-1]}(\omega) : \omega \in E_{I_0}^T\}.%
\end{equation*}
We claim that this set is $(T,B,D,\rho,R_\epsilon)$-spanning. We use the set $\tilde{\Omega}_T := E_{I_0}^T \in \sF$ to show this, where we note that it satisfies $P(\tilde{\Omega}_T) \geq 1 - \epsilon > 1 - \rho$, as required. For every $\omega \in \tilde{\Omega}_T$ and all $k,l$, the control sequence $u_{[0;T-1]}(\omega)$ results in the joint state-noise process satisfying%
\begin{align}\label{contradiction2}
  \Big|\frac{1}{T} \sum_{t=0}^{T-1} \1_{B_k}(x_t(\omega))\1_{D_l}(w_t(\omega)) - Q(B_k)\nu(D_l)\Big| < \frac{1}{I_0} \leq \epsilon (1 - Q(B_k)\nu(D_l)).%
\end{align}
To prove the claim, it now suffices to show that for all $\omega \in \tilde{\Omega}_T$ and $k,l$ we have%
\begin{align*}	
&	\frac{1}{T}|\{t \in [0;T-1] : (\phi(t,x_0(\omega),u_{[0;T-1]}(\omega),w(\omega)),w_t(\omega)) \in B_k \times D_l \}|\\
	&	\qquad\qquad\qquad \geq 1 - (1+\epsilon)(1 - Q(B_k)\nu(D_l)) = (1 + \epsilon) Q(B_k)\nu(D_l) - \epsilon.%
\end{align*}
This follows directly from \eqref{contradiction2}. Also, since the coding and control policy can generate at most $|\sM|^T$ distinct control sequences by time $T$, it follows that $|S_T| \leq |\sM|^T$, therefore $s(T,B,D,\rho,R_\epsilon) \leq |\sM|^T$. Recalling that $T \geq J_0$ was arbitrary, we find that%
\begin{equation*}
	\log s(T,B,D,\rho,R_\epsilon) \leq T \log|\sM| = TC \mbox{\quad for all\ } T \geq J_0,%
\end{equation*}
and therefore dividing by $T$ and letting $T\rightarrow\infty$ yields the desired capacity bound \eqref{eq_centbound}, which completes the proof.
\end{proof}

\subsection{Proof of \cref{thm;NewTheorem1}}

\begin{proof}
Let $c \in (0,1)$ be such that $c < |\det Df_w(x)|$ for all $x \in \R^N$ and $w \in \mathds{W}$. Let also $\delta > 0$ and $\rho \in (0,1)$ be arbitrary. Next, fix a partition of a Borel set $B \subset \R^N$ and let $D = \mathds{W}$, respectively; let $(B_k)_{k=1}^{n}$ be a partition of $B$ and $(D_l)_{l=1}^{m}$ a partition of $D$. Suppose that $B$ has finite Lebesgue measure and%
\begin{equation*}
  Q(B) > 1 - \frac{\delta}{2|\log c|},%
\end{equation*}
where $Q$ denotes the asymptotic mean of the state process. Let $\epsilon > 0$ be small enough such that \cref{myNewLemma} holds, resulting in%
\begin{equation*} 
  h(B,D,\rho,R_\epsilon) \leq C,%
\end{equation*}
where $R_\epsilon$ is the associated collection of $r_{k,l}$'s as defined in \cref{myNewLemma}. Let also $1 - r \coloneqq \sum (1-r_{k,l})$. It is easy to see that $r = 1 - (1 + \epsilon)Q(B) + n m \epsilon$ (or $r = \epsilon$ if one of the $B_k \times D_l$ has full $Q \times \nu$-measure) thus we see that for every sufficiently small $\epsilon$,%
\begin{equation}\label{eq_r_choice}
  2r < \frac{\delta}{|\log c|}.%
\end{equation}
Now fix a sufficiently large $T\in\N$ and let $S$ be a finite $(T,B,D,\rho,R_\epsilon)$-spanning set (whose existence is guaranteed by the proof of \cref{myNewLemma}) with $\tilde{\Omega} \in \sF$, $P(\tilde{\Omega}) \geq 1 - \rho$, the associated subset of $\Omega$. Also let%
\begin{align*}
A &\coloneqq \{(w(\omega),x_0(\omega)) : \omega \in \tilde{\Omega}\}, \\
A(u) &\coloneqq \{ (w,x) \in \mathds{W}^{\Z_+} \times \R^N : \frac{1}{T}\sum_{t=0}^{T-1}\1_{B_k \times D_l}(\phi(t,x,u,w),w_t) \geq 1 - r_{k,l},\ \forall k,l \}\\
A(u,w) &\coloneqq \{ x \in \R^N : (w,x) \in A(u) \}%
\end{align*}
and observe that%
\begin{equation}\label{eq_nc1}
  A \subseteq \bigcup_{u \in S}A(u).%
\end{equation}
By the theorem of Fubini-Tonelli, we have%
\begin{equation}\label{eq_nc2}
  (\nu \times m)(A(u)) = \int m(A(u,w)) \diff \nu(w).%
\end{equation}
Let us now define a set consisting of disjoint collections of subsets of $\{0,\ldots,T-1\}$:%
\begin{align*}
  \A &\coloneqq \{ \Lambda = \{\Lambda_k^l\}_{k,l} : \bigsqcup_{k=1}^{n}\bigsqcup_{l=1}^{m}\Lambda_k^l \subseteq \{0,\ldots,T-1\},\\
	&\qquad\qquad |\Lambda_k^l| \geq (1 - r_{k,l})T, \forall k=1,\ldots,n,\ l=1,\ldots,m \}%
\end{align*}
and note that as a consequence of the definition, $|\bigsqcup_{k=1}^{n}\bigsqcup_{l=1}^{m}\Lambda_k^l| \geq (1-r)T$ for all $\Lambda \in \A$. For such a $\Lambda$, define the set%
\begin{equation*}
  A(u,w,\Lambda) := \{x \in \R^N : (\phi(t,x,u,w),w_t) \in B_k \times D_l \Leftrightarrow t \in \Lambda_k^l \text{ for all } k,l\}%
\end{equation*}
and also (writing $\varphi_{t,u,w}(\cdot) := \varphi(t,\cdot,u,w)$)%
\begin{equation*}
  A_t(u,w,\Lambda) := \phi_{t,u,w}(A(u,w,\Lambda)),\quad t = 0,1,\ldots,T-1.%
\end{equation*}
It is not hard to see that $A(u,w) = \bigsqcup_{\Lambda \in \A} A(u,w,\Lambda)$ is a disjoint union, implying%
\begin{equation}\label{eq_nc3} 
  m(A(u,w)) = \sum_{\Lambda \in \A} m(A(u,w,\Lambda)).%
\end{equation}
If $M>0$ is an upper bound for the density of $\pi_0$, it follows that%
\begin{equation}\label{eq_nc4} 
  1 - \rho \leq (\nu \times \pi_0)(A) \leq M\cdot(\nu \times m)(A).%
\end{equation}
We also have%
\begin{equation*}
	A_t(u,w,\Lambda) \subseteq B_k \text{\quad whenever}\ t \in \Lambda_{k,l}, \ \forall \ k \in \{1,\ldots,n\},\ l \in \{1,\ldots,m\}.%
\end{equation*}
Next, we define the following numbers:%
\begin{align*}
  c_{k,l} \coloneqq \inf_{(x,w) \in B_k \times D_l}|\det Df_w(x)|.%
\end{align*}
Recalling the fact that $f_w$ is injective and $C^1$, for all $(k,l)$ we have%
\begin{align*}
	m(A_{t+1}(u,w,\Lambda)) &\geq c_{k,l} \cdot m(A_t(u,w,\Lambda)) \text{ whenever } t\in \Lambda_{k,l}, \\
	m(A_{t+1}(u,w,\Lambda)) &\geq c \cdot m(A_t(u,w,\Lambda)) \text{ whenever } t \notin \bigsqcup \Lambda_{k,l}.
\end{align*}
Letting $t^*(\Lambda_{k,l}) \coloneqq \max\Lambda_{k,l}$, $t^*(\Lambda) \coloneqq \max_{k,l}t^*(\Lambda_{k,l})$ and applying the above inequalities repeatedly, it is not hard to see that%
\begin{align*}
  m(A(u,w,\Lambda)) \Bigl( \prod_{k=1}^{n}\prod_{l=1}^{m} c_{k,l}^{|\Lambda_{k,l}|-1} \Bigr) c^{rT+nm} \leq m(A_{t^*(\Lambda)}(u,w,\Lambda)).%
\end{align*}
Recall that $c \leq c_{k,l}$. Now in principle, all the exponents of the $c_{k,l}$'s should be $|\Lambda_{k,l}|$, except for possibly one which should be $|\Lambda_{k,l}| - 1$. We do not know which one though, so we write the weaker inequality as above. Combining this with \eqref{eq_nc1}, \eqref{eq_nc2}, \eqref{eq_nc3} and \eqref{eq_nc4}, we obtain%
\begin{align*}
 \frac{1}{M}(1 - \rho) &\leq (\nu \times m)(A) \\
  &\leq |S| \max_{u\in S} (\nu \times m)(A(u)) \\
  &= |S| \max_{u\in S} \int m(A(u,w))\diff \nu(w) \\
  &= |S| \max_{u\in S} \int \sum_{\Lambda \in \A} m(A(u,w,\Lambda))\diff \nu(w) \\
  &= |S| \max_{u\in S} \sum_{\Lambda \in \A} \int m(A(u,w,\Lambda))\diff \nu(w) \\
  &\leq |S| \max_{u\in S} \sum_{\Lambda \in \A} \int m(A_{t^*(\Lambda)}(u,w,\Lambda)) c^{-(rT+nm)}\prod_{k = 1}^{n} \prod_{l = 1}^{m} c_{k,l}^{-(|\Lambda_{k,l}|-1)}  \diff \nu(w) \\
  &= |S| \cdot c^{-(rT+nm)} \max_{u\in S} \sum_{t_{1,1} = (1-r_{1,1})T}^{T} \cdots \sum_{t_{n,m} = (1-r_{n,m})T}^{T} \\
	&\quad \int \sum_{\Lambda \in \A :\ t^*(\Lambda_{k,l}) = t_{k,l} \forall k,l} m(A_{t^*(\Lambda)}(u,w,\Lambda)) \prod_{k = 1}^{n} \prod_{l=1}^{m} c_{k,l}^{-(|\Lambda_{k,l}|-1)} \diff \nu(w)  \\ 
  &\leq |S| \cdot c^{-(2rT+nm)} \max_{u\in S} \sum_{t_{1,1} = (1-r_{1,1})T}^{T} \cdots \sum_{t_{n,m} = (1-r_{n,m})T}^{T}\\
	&\quad \int \sum_{\Lambda \in \A :\ t^*(\Lambda_{k,l}) = t_{k,l} \forall k,l} m(A_{t^*(\Lambda)}(u,w,\Lambda)) \prod_{k = 1}^{n} \prod_{l=1}^{m} c_{k,l}^{-((1-r_{k,l})T-1)} \diff \nu(w).%
\end{align*}
In the last inequality we use that%
\begin{align*}
  &c^{rT + nm} \prod_{k,l} c_{k,l}^{|\Lambda_{k,l}|-1} = c^{rT + \sum_{k,l}|\Lambda_{k,l}|} \prod_{k,l} \Bigl(\frac{c_{k,l}}{c}\Bigr)^{|\Lambda_{k,l}|-1}\\
	&\geq c^{rT + \sum_{k,l}|\Lambda_{k,l}|} \prod_{k,l} \Bigl(\frac{c_{k,l}}{c}\Bigr)^{(1-r_{k,l})T-1} = c^{rT + \sum_{k,l}|\Lambda_{k,l}| - (1-r)T + nm} \prod_{k,l}c_{k,l}^{(1-r_{k,l})T-1}\\
	&\geq c^{2rT + nm} \prod_{k,l}c_{k,l}^{(1-r_{k,l})T-1}.%
\end{align*}
Observe that the sets $A_{t^*(\Lambda)}(u,w,\Lambda)$ with $\Lambda \in \A$, $t^*(\Lambda)$ fixed, are pairwise disjoint, since they are the images of the corresponding sets $A(u,w,\Lambda)$ under the injective map $\varphi_{t^*(\Lambda),u,w}$. Moreover, all of these sets are contained in $B$. Hence,%
\begin{equation*}
  \sum_{\Lambda \in \A : t^*(\Lambda_{k,l}) = t_{k,l} \forall k,l} m(A_{t^*(\Lambda)}(u,w,\Lambda)) \leq m(B),%
\end{equation*}
which, together with the above chain of inequalities, implies%
\begin{align*}
 \frac{1}{M}(1 - \rho) \leq  |S| \cdot m(B) \cdot c^{-(2rT+nm)} \cdot \prod_{k = 1}^{n} \prod_{l=1}^{m} c_{k,l}^{-((1-r_{k,l})T-1)}  \prod_{k=1}^n\prod_{l=1}^m (r_{k,l}T + 1).%
\end{align*}
Since this inequality holds for every $T$ sufficiently large, we can take logarithms on both sides, divide by $T$ and let $T \rightarrow \infty$. This results in%
\begin{equation*}
  0 \leq h(B,D,\rho,R_{\epsilon}) - 2r \log c - \sum_{k=1}^n \sum_{l=1}^n (1-r_{k,l}) \log c_{k,l}.%
\end{equation*}
Recalling the definition of $r_{k,l}$, the fact that $\epsilon$ can be chosen arbitrarily small and \eqref{eq_r_choice}, this leads to the estimate%
\begin{equation*}
  C + \delta \geq \sum_{k=1}^n \sum_{l=1}^n Q(B_k)\nu(D_l) \inf_{(x,w) \in B_k \times D_l}\log |\det Df_w(x)|.%
\end{equation*}
Considering the supremum of the right-hand side over all finite measurable partitions of $B$ and $\mathds{W}$ leads to%
\begin{equation*}
  C + \delta \geq \int \int \1_B(x) \log |\det Df_w(x)| \diff Q(x) \diff \nu(w),%
\end{equation*}
where we use that the integrand is uniformly bounded below by $\log c$ (and hence, we can assume that it is non-negative). Considering now an increasing sequence of sets $B_k \subset \R^N$ whose union is $\R^N$, we can invoke the theorem of monotone convergence to obtain the desired estimate, observing that $\delta$ can be made arbitrarily small as $B_k$ becomes arbitrarily large.
\end{proof}

\subsection{Proof of \cref{thm;NewTheorem2}}

\begin{proof}
Suppose for a contradiction that a causal coding and control policy is such that the state process is AMS ergodic, but that the converse of inequality (\ref{Theorem2Ineq}) holds. Let $r>0$ be small enough so that%
\begin{equation*}
  C < (1-3r)\int  \int \log|f_w'(x)| \diff Q(x) \diff \nu(w).%
\end{equation*}
Since we can approximate the integral by the integral over associated step functions, for any $b \in \N$ large enough, there exists a disjoint collection of intervals $B_1,\ldots,B_{2^{b+1}}$ and a partition $D_1,\ldots,D_m$ of $\W$ such that $B \coloneqq [-b,b] = \bigsqcup_{k=1}^{2^{b+1}}B_k$, and%
\begin{equation*}
  C < (1-3r)\sum_{l=1}^{m}\sum_{k=1}^{2^{b+1}} \nu(D_l)Q(B_k) \log c_{k,l},%
\end{equation*}
where $c_{k,l} \coloneqq \inf_{(x,w) \in B_k \times D_l} |f_w'(x)|$. Put $n \coloneqq 2^{b+1} + 1$, and fix a $b$ (and the associated collection $(B_k)_{k=1}^{n-1}$ of intervals) further large enough such that%
\begin{equation}  \label{QBbound}
  Q([-b,b])(1-r) > 1 - \frac{2.5}{2}r%
\end{equation}
which is possible by continuity of probability. Finally, let $B_n := \R \setminus \bigsqcup_{k=1}^n B_k$. For brevity, in the rest of the proof we write%
\begin{equation*}
  m_{k,l} := Q(B_k)\nu(D_l),\quad k = 1,\ldots,n,\ l = 1,\ldots,m.%
\end{equation*}
Next, we define the following sets in a slightly different manner than in the previous proof:%
\begin{align*}
  A_T(u,w) &\coloneqq \{x \in \R : \forall k,l  \text{ and } \forall N \in \{\Ceil{T(1-3r)},\ldots,T\},\\
	                    &\qquad \frac{1}{N}|\{t \in [0;N-1] : (\phi(t,x,u,w),w_t) \in B_k \times D_l \}| \geq m_{k,l}(1-r) \}.%
\end{align*}
It is easy to see that this set is always bounded. Later on, for appropriate parameters, we will also see that the set is nonempty. For these cases, let%
\begin{equation*}
  \overline{A}_T(u,w) \coloneqq [\inf A_T(u,w),\sup A_T(u,w)]%
\end{equation*}
and let $x_0(T,u,w)$ denote the midpoint of this interval. We claim that there exists $T$ larger than some threshold $M_1 = M_1(r)$ so that for all $u,w$ and $x_1,x_2 \in A_T(u,w)$ there exists a $t^*$ with $\Ceil{(1-2.5r)T} \leq t^* \leq T-1$ satisfying%
\begin{equation*}
  \phi(t^*,x_i,u,w) \in B \spc \text{for} \spc i\in\{1,2\}.%
\end{equation*}
To see this, suppose otherwise. Then for at least one $i \in \{1,2\}$ we have%
\begin{align*}
  &|\{t \in [0;T-1] : \phi(t,x_i,u,w) \in B\}| \leq \Ceil{(1-2.5r)T} + \frac{1}{2}(T - \Ceil{(1-2.5r)T}) \\
											&\leq \frac{1}{2}((1-2.5r)T + 1) + \frac{1}{2}T	= \frac{1}{2} + (1 + (1-2.5r))\frac{1}{2}T  \\
											&= \frac{1}{2} + \bigg(1-\frac{2.5}{2}r \bigg)T < (1-r)Q(B)T,%
\end{align*}
where the last inequality holds for $T$ large enough from the assumption \eqref{QBbound} on $Q(B)$.%

This is a contradiction to $x_i \in A_T(u,w)$, which follows by recalling the definition of $A_T(u,w)$. Let now $\epsilon > 0$ and $\delta > 0$ be given. By the pointwise ergodic theorem (see the construction in the proof of \cref{myNewLemma}), there exists an $M_2 \coloneqq M_2(\epsilon,\delta) \in \N$ such that for all $T \geq M_2$%
\begin{align*}
  &P(\{\omega \in \Omega : \forall k,l, \forall N \geq (1-3r)T,\\
&\qquad \spc \frac{1}{N} \sum_{t=0}^{N-1}\1_{B_k}(x_t(\omega))\1_{D_l}(w_t(\omega)) \geq m_{k,l}(1-\delta) \}) > 1 - \epsilon.%
\end{align*}
We denote by $\tilde{\Omega}(\epsilon,\delta,M_2)$ the set of $\omega$'s for which the event within the braces of the above expression occurs. Recalling that $c_{k,l} \coloneqq \inf_{(x,w) \in B_k \times D_l}|\det Df_w(x)|$ and letting $u,w$ and $x_1,x_2 \in A_T(u,w)$ be arbitrary, we have%
\begin{align}	\label{ATBound}
  |x_1 - x_2| \leq \frac{2b}{\prod_{k,l}c_{k,l}^{m_{k,l}(1-\delta)t^*}} \leq \frac{2b}{\prod_{k,l}c_{k,l}^{m_{k,l}(1-\delta)T(1-2.5r)}}%
\end{align}
which follows by noting that%
\begin{align*}
&\prod_{k,l}c_{k,l}^{(1-\delta)m_{k,l}(1-2.5r)T}|x_1 - x_2| \leq \prod_{k,l}c_{k,l}^{(1-\delta)m_{k,l}(1-2.5r)T}(|x_1| + |x_2|) \\ &\leq \prod_{k,l}c_{k,l}^{(1-\delta)m_{k,l}t^*}|x_1| + \prod_{k,l}c_{k,l}^{(1-\delta)m_{k,l}t^*}|x_2| 
\leq |\phi(t^*,x_1,u,w)| + |\phi(t^*,x_2,u,w)| \leq 2b.%
\end{align*}
Given a realization $\omega \in \Omega$, we denote by $x_0(\omega)$ and $w(\omega)$ the resulting realizations of the initial state and noise sequence, respectively. Given these realizations, the control sequence is thus fully determined, and denoted by $u(\omega)$. It follows quite easily that $\omega \in \tilde{\Omega}(\epsilon,\delta,M_2)$ implies $x_0(\omega) \in A_T(u(\omega),w(\omega))$ for all $T \geq M_2$ and all $\delta < r$. Combining this with \eqref{ATBound}, we conclude that%
\begin{equation*}
  |x_0(\omega) - x_0(T,u(\omega),w(\omega))| \leq \frac{b}{\prod_{k,l}c_{k,l}^{(1-\delta)m_{k,l}(1-2.5r)T}}%
\end{equation*}
for every $T \geq M_2(\epsilon,\delta)$ and every $\omega \in \tilde{\Omega}(\epsilon,\delta,M_2)$. Letting $\delta$ be small enough so that both $(1-3r) \leq (1-2.5r)(1-\delta)$ and $\delta < r$ hold, we conclude that%
\begin{equation*}
  \liminf_{T \to \infty} P\Bigl(\Bigl\{\omega \in \Omega : |x_0(\omega) - x_0(T,u(\omega),w(\omega))| \leq \frac{b}{\prod_{k,l}c_{k,l}^{m_{k,l}(1-3r)T}}\Bigr\}\Bigr) \geq 1 - \epsilon%
\end{equation*}
and since $\epsilon > 0$ was also arbitrary, it follows that%
\begin{equation}\label{keyeq3}
  \limsup_{T \to \infty} P\Bigl(\Bigl\{\omega \in \Omega : |x_0(\omega) - x_0(T,u(\omega),w(\omega))| > \frac{b}{\prod_{k,l}c_{k,l}^{m_{k,l}(1-3r)T}}\Bigr\}\Bigr) = 0.%
\end{equation}

We will see that our initial hypothesis leads to a contradiction with the above equation. To this effect, let us choose $\alpha \in (0,1/2)$ small enough so that for all sufficiently large $L$:%
\begin{equation}\label{alphaassumption}
  1 - \frac{\rho_{\min} \cdot (1-\alpha) }{2 \cdot \rho_{\max}} + \frac{\rho_{\max}^2}{2L\rho_{\min}^2} + \frac{2 \cdot \rho_{\max}}{\rho_{\min}} \frac{\alpha}{1-\alpha} < 1.%
\end{equation}
Let also $\tilde{\Omega} \in \sF$ be such that $P(\tilde{\Omega}) > 1 - \alpha$, and such that for all $T$ large enough (say, larger than $C(\alpha)$),%
\begin{align*}
  |x_0(\omega) - x_0(T,u(\omega),w(\omega))| \leq \frac{b}{\prod_{k,l}c_{k,l}^{m_{k,l}(1-3r)T}}
\end{align*}
for all $\omega \in \tilde{\Omega}$. The idea from here on is to treat $\tilde{\Omega}$ as ``the universe'', since conditioning on this set gives the above deterministic bound. We proceed by defining%
\begin{align*}
  U_T &\coloneqq \{(\gamma_0(q_{0}'),\ldots,\gamma_{T-1}(q_{[0;T-1]}')) \in U^T : q_{[0;T-1]}' \in (\sM')^T\}, 	\\
  \tilde{U}_T &\coloneqq \{(\gamma_0(q_{0}'(\omega)),\ldots,\gamma_{T-1}(q_{[0;T-1]}'(\omega))) \in U^T : \omega \in \tilde{\Omega} \}, \\
  \tilde{R} &\coloneqq \limsup_{T \to \infty} \frac{1}{T}\log|\tilde{U}_T|.%
\end{align*}

We now treat two distinct cases: In Case 1, we show that the condition $\tilde{R} < (1-3r)\sum_{k,l}m_{k,l} \log c_{k,l}$ cannot hold if we want to achieve the desired result. This leaves us with Case 2: the condition that $\tilde{R} \geq (1-3r)\sum_{k,l}m_{k,l}\log c_{k,l}$; however, this condition would imply $\tilde{R} > C$. We show that this cannot hold either, through a tedious argument involving a strong converse to channel coding (with feedback) and optimal transport theory. In the following, we study these two cases separately.%

{\bf Case 1:} Let us suppose that%
\begin{equation} \label{Case1Assumption}
  \tilde{R} < (1-3r)\sum_{k,l}m_{k,l}\log c_{k,l}.%
\end{equation}
Let $\epsilon > 0$ be small enough so that $\tilde{R} + 2\epsilon < (1-3r) \sum_{k,l} m_{k,l}\log c_{k,l}$ and observe that for all $T$ large enough,%
\begin{equation}\label{case1eq}
  |\tilde{U}_T| \leq 2^{(\tilde{R} + \epsilon)T}.%
\end{equation}
Recall also that $\tilde{\Omega}$ is such that for all $T$ large enough,%
\begin{equation}\label{bound4}
  |x_0(\omega) - x_0(T,u(\omega),w(\omega))| \leq \frac{b}{\prod_{k,l}c_{k,l}^{m_{k,l}(1-3r)T}} \mbox{\quad for all\ } \omega \in \tilde{\Omega}.%
\end{equation}
We now fix a noise realization $w$. For all $T$ large enough so that \eqref{case1eq} holds,%
\begin{align*}
m\bigg(\bigcup_{u \in \tilde{U}_T} \overline{A}_T(u,w) \bigg) &\leq \frac{2b \cdot 2^{(\tilde{R} + \epsilon)T}}{\prod_{k,l}c_{k,l}^{m_{k,l}(1-3r)T}}  \leq  \frac{2b \cdot 2^{((1-3r) \sum_{k,l} m_{k,l}\log c_{k,l} - \epsilon )T}}{\prod_{k,l}c_{k,l}^{m_{k,l}(1-3r)T}} \\ 
&\leq \frac{2b \cdot 2^{-\epsilon T} \cdot \prod_{k,l}2^{T(1-3r) m_{k,l}\log c_{k,l}}}{\prod_{k,l}c_{k,l}^{m_{k,l}(1-3r)T}} = \frac{2b}{2^{\epsilon T}},%
\end{align*}
where the inequalities follow by applying the union bound, and from \eqref{case1eq} and \eqref{Case1Assumption}. The above yields%
\begin{equation*}
  \lim_{T \to \infty}  m\bigg(\bigcup_{u \in \tilde{U}_T} \overline{A}_T(u,w) \bigg) = 0,%
\end{equation*}
and thus by the absolute continuity and boundedness assumptions on $\pi_0$, we have%
\begin{equation*}
  \lim_{T \to \infty}\pi_0\big(\bigcup_{u\in \tilde{U}_T} \overline{A}_T(u,w) \big)\big) = 0.%
\end{equation*}

On the other hand, let us define $J \coloneqq \{ w \in \W^{\Z_+} : P(\{\omega \in \tilde{\Omega} |w(\omega) = w\}) > 0 \}$. We note that $J$ is the projection of $\tilde{\Omega}$ onto $\R^{\Z_+}$ from which the set $\{w: P(\omega \in \tilde{\Omega} |w(\omega) = w)=0\}$ is taken out; these ensure that $J$ is a universally measurable set since the image of a Borel set under a measurable map is universally measurable \cite{dynkin1979controlled}.%

We can therefore write%
\begin{align*}	
&\limsup_{T \to \infty}P\Bigl(\Bigl\{ \omega \in \Omega : |x_0(\omega) - x_0(T,u(\omega),w(\omega))| \leq \frac{b}{\prod_{k,l}c_{k,l}^{m_{k,l}(1-3r)T}} \big| \omega \in \tilde{\Omega} \Bigr\}\Bigr) \\
&= \limsup_{T \to \infty} \Big( P\Bigl(\Bigl\{ \omega \in \Omega : |x_0(\omega) - x_0(T,u(\omega),w(\omega))| \leq \\
&\qquad\qquad\qquad\qquad   \frac{b}{\prod_{k,l}c_{k,l}^{m_{k,l}(1-3r)T}} \big| \omega \in \tilde{\Omega}, w(\omega) \in J \Bigr\}\Bigr) \cdot P(J)  \\
&+ P\Bigl(\Bigl\{ \omega \in \Omega : |x_0(\omega) - x_0(T,u(\omega),w(\omega))| \leq \\
&\qquad\qquad\qquad\qquad  \frac{b }{\prod_{k,l}c_{k,l}^{m_{k,l}(1-3r)T}} \big| \omega \in \tilde{\Omega}, w(\omega) \in J^c \Bigr\}\Bigr) \cdot P(J^c) \Big).%
\end{align*}
Now, noting that $P(\tilde{\Omega}) > 1 - \alpha$ implies $\nu(J^c) \leq \alpha$, we can further write%
\begin{align*}
  &\leq \limsup_{T \to \infty}P\Bigl(\Bigl\{ \omega \in \Omega : |x_0(\omega) - x_0(T,u(\omega),w(\omega))| \leq \\
		&\qquad\qquad\qquad\qquad \frac{b }{\prod_{k,l}c_{k,l}^{m_{k,l}(1-3r)T}} \big| \omega \in \tilde{\Omega}, w(\omega) \in J \Bigr\}\Bigr) \cdot P(J) + \alpha.%
\end{align*}
Observe that for a noise realization $w \in J$, we have%
\begin{align*}
&\limsup_{T \to \infty}P\Bigl(\Bigl\{ \omega \in \Omega : |x_0(\omega) - x_0(T,u(\omega),w(\omega))| \leq \\
&\qquad\qquad\qquad\qquad  \frac{b }{\prod_{k,l}c_{k,l}^{m_{k,l}(1-3r)T}} \big| \omega \in \tilde{\Omega},w(\omega) = w \Bigr\}\Bigr) \\
&\leq \limsup_{T \to \infty}P\Bigl(\Bigl\{\omega \in \Omega : x_0(\omega) \in \bigcup_{u \in \Tilde{U}_T}\overline{A}_T(u,w) \big| \omega \in \tilde{\Omega},w(\omega)=w \Bigr\}\Bigr) \\
&\leq \frac{1}{P(\omega \in \tilde{\Omega}| w(\omega) = w)}\limsup_{T \to \infty}P\Bigl(\Bigl\{\omega \in \Omega : x_0(\omega) \in \bigcup_{u \in \Tilde{U}_T}\overline{A}_T(u,w)|w(\omega) = w \Bigr\}\Bigr)	\\ 
&= \frac{1}{P(\omega \in \tilde{\Omega}|w(\omega) = w)}\limsup_{T \to \infty}\pi_0\Bigl(\bigcup_{u \in \Tilde{U}_T}\overline{A}_T(u,w)\Bigr) = 0,%
\end{align*}
where the first inequality can be justified by noting that%
\begin{equation*}
  |x_0(\omega) - x_0(T,u(\omega),w(\omega))| \leq \frac{b}{\prod_{k,l}c_{k,l}^{m_{k,l}(1-3r)T}} \quad\Rightarrow\quad x_0(\omega) \in \overline{A}_T(u(\omega),w)%
\end{equation*}
for all $T$ sufficiently large (see (\ref{bound4})) and the last inequality follows by independence of noise and initial state. We thus have a uniform upper bound on the limsup when conditioned on $w \in J$, hence%
\begin{align*}
&\limsup_{T \to \infty}P\Bigl(\Bigl\{ \omega \in \Omega : |x_0(\omega) - x_0(T,u(\omega),w(\omega))| \leq \\
&\qquad\qquad\qquad\qquad \frac{b}{\prod_{k,l}c_{k,l}^{m_{k,l}(1-3r)T}} \big| \omega \in \tilde{\Omega}, w(\omega) \in J \Bigr\}\Bigr) = 0.%
\end{align*}
Therefore,%
\begin{equation*}
  \limsup_{T \to \infty}P\Bigl(\Bigl\{\omega \in \Omega: |x_0(\omega) - x_0(T,u(\omega),w(\omega))| \leq \frac{b}{\prod_{k.l}c_{k,l}^{m_{k,l}(1-3r)T}} \big| \omega \in \tilde{\Omega} \Bigr\}\Bigr) \leq \alpha,%
\end{equation*}
which contradicts \eqref{keyeq3}, since $\alpha < 1/2$. Hence, the proof for Case 1 is complete.%

{\bf Case 2:} Now we suppose that%
\begin{equation*}
  \tilde{R} \geq (1-3r)\sum_{k,l}m_{k,l}\log c_{k,l},%
\end{equation*}
thus by assumption we also have $\tilde{R} > C$. Recall that the proof is by contradiction. In this case, we will obtain a contradiction to a generalized version of the strong converse theorem for discrete memoryless channels with feedback (see \cite{kemperman1971strong} and \cref{thm;strongconv}). Recall that by definition of $\tilde{\Omega}$, we have that for any $T$ sufficiently large, the inequality%
\begin{equation*}
  |x_0(\omega) - x_0(T,u(\omega),w(\omega))| \leq \frac{b}{\prod_{k,l}c_{k,l}^{m_{k,l}(1-3r)T}}%
\end{equation*}
holds for any $\omega \in \tilde{\Omega}$. Also recall that $P(\tilde{\Omega}) > 1 - \alpha$ for $\alpha$ satisfying the important assumption  \eqref{alphaassumption}. As such, there must exist some noise realization $w$ such that $P(\{\omega \in \tilde{\Omega}|w(\omega) = w\}) > 1 - \alpha$. This can be seen by contradiction; suppose no such realization exists. Letting $\nu$ denote the measure on the space of noise realizations, we can write%
\begin{align}
  P(\omega \in \tilde{\Omega}) = \int P(\omega \in \tilde{\Omega}|w(\omega) = \tilde{w}) \diff \nu(\tilde{w}) \leq  \int (1 - \alpha) \diff \nu(\tilde{w}) = 1-\alpha%
\end{align}
which is a contradiction since $P(\tilde{\Omega})> 1-\alpha$. The existence of such a realization $w$ yields%
\begin{align}\label{conditionedbound}
\begin{split}
&\liminf_{T \to \infty}P\Bigl(\Bigl\{\omega \in \Omega : |x_0(\omega) - x_0(T,u(\omega),w)| \leq\\
 &\qquad\qquad\qquad\qquad \frac{b}{\prod_{k,l}c_{k,l}^{m_{k,l}(1-3r)T}} \big| w(\omega) = w \Bigr\}\Bigr) > 1 - \alpha.%
\end{split}
\end{align}
In the remainder of the proof, we condition on the occurrence of the noise realization $w$. We follow an almost identical approach as in the proof from \cite{kawan2019invariance}; we will construct a sequence of codes to transmit a uniform random variable which contradicts a version of the strong converse result for DMCs. This is accomplished in four steps.%

\textbf{Step 1 (Construction of bins):} For every $T \geq 1$, define $S_T \coloneqq \{x_0(T,u,w) : u \in \tilde{U}_T \}$ and enumerate the elements of this set so that%
\begin{equation}\label{enum}
  S_T \coloneqq \{x_1(T),\ldots,x_{n_1(T)}(T)\}.%
\end{equation}
We continue by defining the not necessarily disjoint collection of bins%
\begin{equation*}
  B_{i}^{T} := \Bigl\{x \in \R : |x - x_i(T)| \leq \frac{b}{\prod_{k,l}c_{k,l}^{m_{k,l}(1-3r)T}}\Bigr\},\quad i = 1,\ldots,n_1(T).%
\end{equation*}
Note that for a fixed $T$, each bin has the same Lebesgue measure which we denote by $\rho_T \coloneqq (2b)/\prod_{k,l}c_{k,l}^{m_{k,l}(1-3r)T}$. Recalling that $P(\{\omega \in \tilde{\Omega}|w(\omega) = w\}) > 1 - \alpha$, it follows that%
\begin{equation*}
  1 - \alpha < \liminf_{T \to \infty}P\Bigl(\Bigl\{\omega \in \Omega: x_0(\omega) \in \bigcup_{i=1}^{n_1(T)}B_i^T \big| w(\omega) = w \Bigr\}\Bigr),%
\end{equation*}
from which by independence of noise and initial state, we obtain%
\begin{equation}\label{eq_bit_measure}
  1 - \alpha < \liminf_{T \to \infty} \pi_0 \Bigl( \bigcup_{i=0}^{n_1(T)}B_i^T \Bigr).%
\end{equation}
We will disregard the bins that are only partially contained in $K$. Since $\rho_T \rightarrow 0$ as $T \rightarrow \infty$ and the union of the measure of bins that are partially inside of $K$ can have at most a Lebesgue measure of $2\rho_T$, they will contribute negligible measure as $T$ gets large. Also, let us suppose without loss of generality that the ordering of the bins in \eqref{enum} is such that the last $n(T)$ are the ones not contained in $K$. Observing that%
\begin{align*}
&\liminf_{T \to \infty} \pi_0 \Bigl( \bigcup_{i=0}^{n_1(T)}B_i^T  \Bigr) = \liminf_{T \to \infty} \pi_0 \Bigl( K  \cap \bigcup_{i=0}^{n_1(T)}B_i^T \Bigr) = \liminf_{T \to \infty} \pi_0 \Bigl( \bigcup_{i=0}^{n_1(T) - n(T)}B_i^T \Bigr) \\
&\leq \liminf_{T \to \infty} \rho_{\max} \cdot m\Bigl( \bigcup_{i=0}^{n_1(T) - n(T)}B_i^T \Bigr) \leq  \liminf_{T \to \infty} \Bigl( \frac{\rho_{\max} \cdot 2b \cdot (n_1(T) - n(T))}{\prod_{k,l}c_{k,l}^{m_{k,l}(1-3r)T}} \Bigr),%
\end{align*}
we obtain%
\begin{equation*}
  \frac{1 - \alpha}{2b \cdot \rho_{\max}} \leq \liminf_{T \to \infty}\Bigg( \frac{(n_1(T) - n(T))}{\prod_{k,l}c_{k,l}^{m_{k,l}{(1-3r)T}}} \Bigg),%
\end{equation*}
from which we conclude that the number of bins $n_1(T) - n(T)$ which are entirely contained in $K$ must grow at an exponential rate of at least $\sum_{k,l}m_{k,l}(1-3r) \log c_{k,l}$ with $T$, just as $n_1(T)$ does. Thus, since we are concerned only with the number of bins entirely contained in $K$, we may as well assume that all are entirely in $K$ (or alternatively, relabel $n_1(T) - n(T)$ to be $n_1(T)$).%

We continue by extracting a sub-collection of disjoint bins $(C_i^T)_{i=1}^{n_2(T)}$ as described in \cite[App.~A]{kawan2019invariance}. This new sub-collection has the property that%
\begin{equation*}
  \frac{1}{2} m\bigg(\bigcup_{i=1}^{n_1(T)}B_i^T \bigg) \leq m\bigg(\bigcup_{i=1}^{n_2(T)}C_i^T \bigg).%
\end{equation*}
Also, it is clear that for any given $T$, $\frac{1}{2} n_1(T) \leq n_2(T)$. Hence, we also have the exponential growth condition of%
\begin{equation*}
	\limsup_{T \to \infty}\frac{1}{T}\log n_2(T) \geq (1-3r)\sum_{k,l}m_{k,l}\log c_{k,l}.%
\end{equation*}
Analogously to \cite{kawan2019invariance}, define the collection $(D_i^T)_{i = 1}^{n_2(T)}$\footnote{These sets should not be confused with the set $D_1,\ldots,D_m \subset \W$.} and observe that $m(D_i^T \backslash C_i^T) \leq \rho_T$ for all $i$. Finally, for a fixed $L \in \N$ we join $L$ successive $D_i^T$ blocks (see \cite[p.~27]{kawan2019invariance} for an exact formulation) to get a collection $(E_i^T)_{i = 1}^{n_3(T)}$, where $n_3(T) = \lfloor \frac{n_2(T)}{L} \rfloor + 1$, possibly adding some empty sets in the last block. Again, the following holds:%
\begin{equation*}
	\limsup_{T \to \infty}\frac{1}{T}\log n_3(T) \geq (1-3r)\sum_{k=1}^{n}\sum_{l=1}^{m}m_{k,l} \log c_{k,l},\quad m(E_i^T) \geq L\rho_T.%
\end{equation*}
We also define%
\begin{equation*}
	M_T := \bigcup_{i=1}^{n_1(T)}B_i^T 	\spc \spc \spc \overline{M}_T := \bigcup_{i=1}^{n_3(T)}E_i^T \backslash (D_{iL}^T \backslash C_{iL}^T )%
\end{equation*}
and observe that $m(M_T) \leq 2n_2(T)\rho_T \leq 2n_3(T)L\rho_T$.%

\textbf{Step 2 (Auxiliary coding scheme):} We now construct a sequence of codes to transmit information over the channel. We will transmit a quantized version of the initial state random variable $x_0$. The quantization will be done using the bins constructed earlier. For a fixed $L$ and for each $T$, we will construct a code. Note that we are considering a channel with feedback, which can be used by the encoding function. For a given $T$, the encoding and decoding processes are specified as follows.%

\textbf{Encoder}: We give to the encoder the noise realization $w$ that we have conditioned on throughout, the function $f$ corresponding to the system dynamics, and the fixed causal coding and control policy. In the classical notion of a code, the encoding function is a deterministic map and given the (system) noise sequence realization, this is the case here. The transmitted codeword is determined as follows. For an initial state realization $x_0$, the first symbol of the codeword is $q_0 = \gamma_0^e(x_0)$. Now, because the channel has feedback, the encoder can determine $u_0$ by applying the decoding function of the fixed causal coding and control policy to the output of the channel resulting from the first codeword symbol $q_0$. Thus, using the fixed and known noise realization $w$, $x_1$ can be computed. Then, the second codeword symbol $q_1= \gamma_1^e(x_0,x_1,u_0)$ is computed again using the causal coding and control policy, and so on until $q_{T-1}$ is determined (note that the encoder makes use of the channel feedback from the channel, and thus we use the generalized version of the strong converse theorem for channel capacity to obtain a contradiction). We are essentially viewing the coding and control policy as a scheme from which the initial state can be estimated at the controller end of the channel.%

\textbf{Decoder}: At time $T$, the decoder has received $T$ symbols from the channel, which are used to compute the control decisions $u_0,\ldots,u_{T-1}$ according to the fixed causal coding and control policy. The decoder also has knowledge of the noise sequence $w$ and uses it to compute the point $x_0(T,u,w)$. Our goal is to use the received channel output and control sequence to reconstruct the index $Y$ of the bin $E_Y^T$ containing $x_0$. We do this by looking at the point $x_0(T,u,w)$ for the observed control sequence $u$. Note that $w$ can be thought of as deterministic since we are conditioning on its occurrence. Recall also that $x_0(T,u,w)$ is the ``midpoint'' of the set $A_T(u,w)$, and \textit{can be computed without knowledge of the initial state $x_0$}. We simply decide on our guess $\tilde{Y}$ of the index as follows.%
\begin{itemize}
\item If $x_0(T,u,w) \in M_T$, take the index $i$ of the set $E_i^T$ containing $x_0(T,u,w)$. %
\item If $x_0(T,u,w) \notin M_T$, then decide randomly between $i$ and $i+1$, where $i$ is the index of the set $E_i^T$ that $x_0(T,u,w)$ belongs to.%
\end{itemize}

\textbf{Analysis of probability of the error for the code}. To study the probability of error, let $Y$ be a random variable on the indices $\{1,\ldots,n_3(T)\}$, where $P(Y = i) = \pi_0(E_i^T)$. We analyze $P(\tilde{Y} \neq Y)$.%

First, by construction of the bins and the estimation scheme, we have%
\begin{equation*}
  P\Bigl(\tilde{Y} \neq Y \big| x_0 \in \overline{M}_T,|x_0 - x_0(T,u,w)| \leq \frac{b}{\prod_{k,l}c_{k,l}^{m_{k,l}(1-3r)T}}\Bigr) = 0%
\end{equation*}
and%
\begin{equation*}
  P\Bigl(\tilde{Y} \neq Y \big| x_0 \in M_T \backslash \overline{M}_T,|x_0 - x_0(T,u,w)| \leq \frac{b}{\prod_{k,l}c_{k,l}^{m_{k,l}(1-3r)T}}\Bigr) \leq \frac{1}{2}.%
\end{equation*}
As such, from \eqref{conditionedbound}, it is not hard to see that for every $T$ sufficiently large,%
\begin{equation*}
  P(Y \neq \tilde{Y}) \leq \frac{1}{2}\pi_0(M_T\backslash \overline{M}_T) + \alpha.%
\end{equation*}
By an analysis exactly as in \cite{kawan2019invariance}, we have%
\begin{equation*}
  \pi_0(\M_T \backslash \overline{M}_T) \leq \frac{1}{L}\frac{\rho_{\max}}{\rho_{\min}}\pi_0(M_T).%
\end{equation*}
Combining the above two inequalities, we obtain%
\begin{equation*}
  \sum_{i=1}^{n_3(T)}P(Y = i)P(\tilde{Y} \neq Y | Y = i) \leq \frac{1}{2L}\frac{\rho_{\max}}{\rho_{\min}}\pi_0(M_T) + \alpha.%
\end{equation*}

\textbf{Step 3 (Introduction of an auxiliary uniform random variable):} In order to obtain a contradiction to the strong converse theorem for DMCs, we need to transmit a random variable \textit{uniformly} distributed on the indices $1,\ldots,n_3(T)$. Let us call this random variable $W = W_T$. Of course, at any time step, $W$ must be conditionally independent from the channel output, given the channel input. To obtain the desired contradiction, we must show that $\lim_{T \to  \infty}P(W \neq \tilde{Y}) < 1$. Before considering this quantity, note that by following exactly the same steps as in \cite{kawan2019invariance}, we obtain%
\begin{equation*}
  \pi_0(M_T) \leq \rho_{\max} \cdot m(M_T) \leq 2 n_3(T) \rho_{\max} \cdot L \rho_T%
\end{equation*}
and also%
\begin{equation*}
  \sum_{i=1}^{n_3(T)} \frac{1}{n_3(T)}P(\tilde{Y} \neq Y | Y = i) \leq \frac{\alpha + \frac{\rho_{\max}\pi_0(M_T)}{4L\rho_{\min}}}{\frac{\rho_{\min}\pi_0(M_T)}{2\rho_{\max}}}.%
\end{equation*}
Again as in \cite{kawan2019invariance} we have%
\begin{equation} \label{step4bound}
P(W \neq \tilde{Y}) = \sum_{i=1}^{n_3(T)}P(W=i)P(\tilde{Y} \neq W | W = i) \leq P(Y \neq W) + \frac{\alpha + \frac{\rho_{\max}\pi_0(M_T)}{4L\rho_{\min}}}{\frac{\rho_{\min}\pi_0(M_T)}{2\rho_{\max}}}.%
\end{equation}

\textbf{Step 4 (Application of optimal transport):} Recall the independence condition mentioned above that $W$ must satisfy. To achieve this, one could adjoin $W$ to the common probability space using the product measure, thus keeping $W$ independent from all other random variables. Observe however, that the random variable $x_0$ satisfies the independence condition that we require $W$ to satisfy. As such, we are free to choose any possible coupling between $W_T$ and $x_0$ while still ensuring that $W$ will remain independent form the channel output given the channel input (in particular, $x_0$ and $W$ need not be independent). We will take advantage of this observation.%

Consider \eqref{step4bound} and note that if the limit as $T \to \infty$ of the right-hand side is strictly less than $1$, then we will have the desired contradiction with the strong converse. As such, we proceed by finding a coupling between $W$ and $x_0$ which makes $P(Y \neq W)$ small enough so that the limit is less than $1$.%

We continue by letting $\mu$ denote the law of $Y$. That is, for every index $i \in {1,\ldots,n_3(T)}$, $\mu(i) = \pi_0(E_i^T)$. Let also $\nu$ represent the law of $W$, i.e., a uniform measure on the set $\{1,\ldots,n_3(T)\}$. We now invoke \cref{tvlemma}, which guarantees the existence of a coupling $(Y,W):(\Omega,\sF,P) \to \{1,\ldots,n_3(T)\}^2$ such that%
\begin{equation*}
  P(Y \neq W) = \frac{1}{2}\sum_{i=1}^{n_3(T)}|\mu(i)-\nu(i)|.%
\end{equation*}
Let now $A = \{ i \in \{1,\ldots,n_3(T)\} :\mu(i)\geq \nu(i)  \}$ and observe that%
\begin{align*}
& 1 - \sum_{i=1}^{n_3(T)}\min(\mu(i),\nu(i)) = \frac{1}{2}\sum_{i=1}^{n_3(T)}\mu(i) + \frac{1}{2}\sum_{i=1}^{n_3(T)}\nu(i) - \sum_{i \in A} \nu(i) - \sum_{i \in A^c} \mu(i)	\\
&=\frac{1}{2}\sum_{i \in A}\mu(i) - \frac{1}{2}\sum_{i \in A^c}\mu(i) - \frac{1}{2}\sum_{i \in A}\nu(i) + \frac{1}{2}\sum_{i \in A^c}\nu(i) 	\\
&=\frac{1}{2} \Big( \sum_{i \in A}\mu(i) - \nu(i) \Big)   + \frac{1}{2} \Big( \sum_{i \in A^c}\nu(i) - \mu(i)\Big) = \frac{1}{2}\sum_{i=1}^{n_3(T)}|\mu(i)-\nu(i)|.%
\end{align*}
Thus, we can write%
\begin{equation*}
  P(Y \neq W) = \frac{1}{2}\sum_{i=1}^{n_3(T)}|\mu(i)-\nu(i)| = 1 - \sum_{i=1}^{n_3(T)}\min(\mu(i),\nu(i)).%
\end{equation*}
To get an upper bound for the right-hand side, note that%
\begin{align*}
\mu(i) &= \pi_0(E_i^T) \geq \rho_{\min} \cdot m(E_i^T) = \frac{n_3(T)}{n_3(T)} \cdot m(E_i^T) \cdot \rho_{\min} \\
&\geq \frac{n_2(T) \cdot  \rho_{T} \cdot \rho_{\min}  }{n_3(T)} \geq \frac{ m(M_T) \cdot \rho_{\min}}{2 \cdot n_3(T)} \geq \frac{ \pi_0(M_T) \cdot \rho_{\min}}{2 \cdot \rho_{\max} \cdot n_3(T)} \geq \frac{\rho_{\min} \cdot (1-\alpha) }{2 \cdot \rho_{\max} \cdot n_3(T)}.%
\end{align*}
Recalling that $\nu(i) = 1/n_3(T)$ for each $i$, we have $\min(\mu(i),\nu(i)) \geq (\rho_{\min} \cdot (1-\alpha))/(2 \cdot \rho_{\max} \cdot n_3(T))$ for all $i$, and therefore%
\begin{equation*}
  P(Y \neq W) \leq 1 - \frac{\rho_{\min} \cdot (1-\alpha) }{2 \cdot \rho_{\max} }.%
\end{equation*}
Combining with \eqref{step4bound}, we obtain%
\begin{equation*}
  P(W \neq \tilde{Y}) \leq 1 - \frac{\rho_{\min} \cdot (1-\alpha) }{2 \cdot \rho_{\max}} + \frac{\alpha + \frac{\rho_{\max}\pi_0(M_T)}{4L\rho_{\min}}}{\frac{\rho_{\min}\pi_0(M_T)}{2\rho_{\max}}}%
\end{equation*}
which holds for all $T$ sufficiently large. We now evaluate the right-hand side to determine its behavior as $T$ tends to infinity. We have%
\begin{align*}
 & \limsup_{T \to \infty} \Big(  1 - \frac{\rho_{\min} \cdot (1-\alpha) }{2 \cdot \rho_{\max}} + \frac{\alpha + \frac{\rho_{\max}\pi_0(M_T)}{4L\rho_{\min}}}{\frac{\rho_{\min}\pi_0(M_T)}{2\rho_{\max}}} \Big) \\
&\leq 1 - \frac{\rho_{\min} \cdot (1-\alpha) }{2 \cdot \rho_{\max}} + \frac{\rho_{\max}^2}{2L\rho_{\min}^2} + \frac{2 \cdot \alpha \cdot \rho_{\max}}{\rho_{\min}}\limsup_{T \to \infty}\frac{1}{\pi_0(M_T)} \\
&\leq 1 - \frac{\rho_{\min} \cdot (1-\alpha) }{2 \cdot \rho_{\max}} + \frac{\rho_{\max}^2}{2L\rho_{\min}^2} + \frac{2 \cdot \rho_{\max}}{\rho_{\min}} \frac{\alpha}{1-\alpha},%
\end{align*}
where the last inequality follows from \eqref{eq_bit_measure}. Recall now that throughout, $L \in \N$ was fixed but arbitrary. Taking $L$ large enough so that \eqref{alphaassumption} holds, and writing $T$-subscripts to emphasize $T$-dependence, we obtain $\limsup_{T \to \infty}P(W_T \neq \tilde{Y}_T) < 1$, which is a contradiction, since it negates the strong converse theorem for DMCs with feedback. Hence, the proof is complete.
\end{proof}

\section{Appendix}\label{sec:app}

In this section, we state a few results required in the paper.

\subsection{A result from optimal transport}

In the proof of \cref{thm;NewTheorem2}, a basic result from optimal transport is used, which we state here.%

\begin{definition}
Let $\mu$ and $\nu$ be Borel probability measures on a metric space $(S,d)$. A coupling of $\mu$ and $\nu$ is a pair of random variables $X,Y$ defined on some probability space $(\Omega,\sF,P)$ such that the law of the random variable $(X,Y)$ on $S^2$ admits $\mu$ and $\nu$ as its marginals.
\end{definition}

The notion of coupling can easily be generalized for the case where the measures $\mu$ and $\nu$ are on distinct spaces, however we do not require that level of generality. The total variation distance between probability measures on the same measurable space serves as a measure for how distinct they are. The definition reads as follows.%

\begin{definition}
Let $\mu$ and $\nu$ be probability measures on a measurable space $(\Omega,\sF)$. We define the total variation distance as%
\begin{equation*}
  \norm{\mu - \nu}_{TV} := 2 \sup_{A \in \sF}|\mu(A)-\nu(A)|.%
\end{equation*}
\end{definition}%
\begin{lemma}\label{tvlemma}
Let $(X,Y):(\Omega,\sF,P) \rightarrow S^2$ be a coupling of the probability measures $\mu$ and $\nu$ on the metric space $(S,d)$. Then%
\begin{equation*}
  \norm{\mu - \nu}_{\mathrm{TV}} \leq 2 \cdot P(\{\omega \in \Omega : X(\omega) \neq Y(\omega)\}).%
\end{equation*}
\end{lemma}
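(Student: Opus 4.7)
The plan is to control the deviation $|\mu(A)-\nu(A)|$ uniformly over measurable sets $A$ by the probability that the coupling disagrees, and then invoke the definition of total variation. Since $(X,Y)$ is a coupling, we have $\mu(A) = P(X\in A)$ and $\nu(A) = P(Y\in A)$ for every Borel set $A \subseteq S$.

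First I would split each marginal according to whether the other coordinate lies in $A$:
\begin{align*}
P(X \in A) &= P(X \in A,\, Y \in A) + P(X \in A,\, Y \notin A),\\
P(Y \in A) &= P(X \in A,\, Y \in A) + P(X \notin A,\, Y \in A).
\end{align*}
Subtracting the second line from the first yields
\begin{equation*}
  \mu(A) - \nu(A) = P(X \in A,\, Y \notin A) - P(X \notin A,\, Y \in A).
\end{equation*}

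Next I would observe that on either event $\{X \in A,\, Y \notin A\}$ or $\{X \notin A,\, Y \in A\}$ we necessarily have $X(\omega) \neq Y(\omega)$. Therefore each of the two terms on the right-hand side is bounded by $P(X \neq Y)$, giving
\begin{equation*}
  \mu(A) - \nu(A) \leq P(X \in A,\, Y \notin A) \leq P(X \neq Y),
\end{equation*}
and by symmetry the same bound holds for $\nu(A) - \mu(A)$. Hence $|\mu(A)-\nu(A)| \leq P(X \neq Y)$ for every Borel set $A$.

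Finally I would take the supremum over $A$ and multiply by $2$ to conclude that
\begin{equation*}
  \norm{\mu-\nu}_{\mathrm{TV}} = 2\sup_{A}|\mu(A)-\nu(A)| \leq 2\, P(X \neq Y),
\end{equation*}
which is the desired inequality. There is no real obstacle here; the only minor care needed is to make sure the decomposition into disjoint events is applied correctly so that the two error terms can each be dominated by $P(X \neq Y)$ without double counting. The whole argument is a one-line application of the union bound after the decomposition above.
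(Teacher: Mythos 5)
Your proof is correct, and it is the standard elementary argument for this fact. The paper itself does not give a proof at all; it simply points to Equation (6.11) in Villani's book on optimal transport. So your write-up supplies the details that the paper delegates to a reference. The key step is exactly as you describe: writing
\begin{equation*}
  \mu(A) - \nu(A) = P(X \in A,\, Y \notin A) - P(X \notin A,\, Y \in A),
\end{equation*}
noting that each of the two probabilities on the right is bounded above by $P(X \neq Y)$ (since both events are contained in $\{X \neq Y\}$) and below by $0$, so that $|\mu(A)-\nu(A)| \leq P(X\neq Y)$ uniformly in $A$, and then taking the supremum. One small point of precision worth keeping in mind: the inequality $\mu(A) - \nu(A) \leq P(X \in A, Y \notin A)$ uses that the subtracted term $P(X \notin A, Y \in A)$ is nonnegative, so it is not really a union bound but a direct comparison; your phrasing ``each of the two terms is bounded by $P(X\neq Y)$'' is the right way to see it. No gap here.
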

If in addition, $S$ is a finite set, then a coupling $(X,Y)$ exists which achieves the above bound. 
\begin{proof}
See Equation (6.11) in \cite{villani2008optimal}.
\end{proof}
Note also that if $S$ is finite in the above setup, then a simple calculation results in%
\begin{equation*}
  \norm{\mu - \nu}_{\mathrm{TV}} = \sum_{x \in S}|\nu(x) - \mu(x)|.%
\end{equation*}
Indeed, for finite $S$ let $A \coloneqq \{x \in S : \mu(x) \geq \nu(x)\}$. The result follows by noting that $\norm{\mu - \nu}_{\mathrm{TV}} = |\mu(A) - \nu(A)| + |\mu(A^c) - \nu(A^c)|$. As such, a coupling $(X,Y)$ of the laws exists which satisfies%
\begin{equation*}
  P(X \neq Y) = \frac{1}{2} \sum_{x \in S}|\nu(x) - \mu(x)|.%
\end{equation*}
We make use of this identity in case 2 of the proof for the noisy channel case.%

\subsection{Channel coding theorem}\label{infoBackground}

When considering a system controlled over a noisy channel, we make use of the strong converse of the noisy channel coding theorem. We state the necessary definitions and theorems here without proof. A detailed overview of these concepts can be found in \cite{cover1999elements}.

\begin{definition}
Consider a memoryless finite alphabet channel with input alphabet $\sX$, output alphabet $\sY$ and a given transition probability measure. The capacity of the channel is defined by $C \coloneqq \sup_{p(x)}I(\sX,\sY)$, where the sup is taken over all possible probability measures on the input alphabet $\sX$. We call such a channel a Discrete Memoryless Channel (DMC). A DMC with feedback is as above, but with the additional property that the encoder has knowledge of the channel output. It is well-known that feedback does not increase channel capacity.
\end{definition}

Next, we provide the definition of a code. We provide the definitions for channels without feedback, however the feedback case is very similar, the only difference being that at a given time, the encoder can use the channel output for previous inputs in generating the next codeword symbol.%

\begin{definition}
For $M,n\in\N$, an $(M,n)$-code consists of an encoding function $x^n:\{1,\ldots,M\} \to \sX^n$ and a decoding function $g: \sY^n \to \{1,\ldots,M\}$. We define the rate of an $(M,n)$-code by $R \coloneqq (\log M)/n$.
\end{definition}

For a code as above, we call $x^n(1),x^n(2),\ldots,x^n(M)$ the codewords. Because the channel distorts the codewords, we must consider the probability that we can decode correctly. This leads to the following definition.%

\begin{definition}
The maximal error of an $(M,n)$-code is given by%
\begin{equation*}
  \lambda^{(n)} := \max_{i = 1,\ldots,M}P(g(Y^n) \neq i|X^n = x^n(i)).%
\end{equation*}
\end{definition}

\begin{definition}
A rate $R$ is called achievable if there exists a sequence of $(\Ceil{2^{nR}},n)$-codes with the property that $\lambda^{(n)} \to 0$ as $n \to \infty$.
\end{definition}

The following is the strong converse of the noisy channel coding theorem in information theory.%

\begin{theorem}\label{thm;strongconv}
Consider a DMC $(\sX,p(\cdot|\cdot),\sY)$ of capacity $C$. Let $R>C$ and consider an arbitrary sequence of $(\Ceil{2^{nR}},n)$-codes, used to transmit the uniform random variables $W_n$, uniformly distributed on the set $\{1,\ldots,2^{nR}\}$, respectively. Then $P(W_n \neq g_n(Y^n)) \to 1$ as $n \to \infty$.
\end{theorem}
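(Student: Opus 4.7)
My plan is to prove the strong converse via the Han-Verd\'u information-spectrum method, which handles the no-feedback and feedback cases uniformly. The proof splits into three steps: (i) a spectrum-style converse bound relating error probability to a tail probability of the log-likelihood ratio; (ii) concentration of the normalized log-ratio at or below the capacity $C$; and (iii) combining the two to conclude. Since the theorem is stated only for DMCs, I will freely use finiteness of $\sX$ and $\sY$.

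For step (i), I would establish that for any $(M_n,n)$-code with a uniform message $W_n\in\{1,\ldots,M_n\}$, any $\gamma>0$, and any auxiliary distribution $Q_{Y^n}$ on $\sY^n$,
\begin{equation*}
P(W_n = g_n(Y^n)) \leq P\Bigl( \log \tfrac{P_{Y^n|X^n}(Y^n|X^n)}{Q_{Y^n}(Y^n)} \geq \log M_n - \gamma \Bigr) + 2^{-\gamma}.
\end{equation*}
This is obtained by writing the correct-decoding probability as $M_n^{-1}\sum_i P_{Y^n|X^n=x^n(i)}(D_i)$, splitting each term according to whether the log-likelihood ratio $r_n$ exceeds $\log M_n-\gamma$, and bounding the low-ratio contribution through $P_{Y^n|X^n=x^n(i)}(D_i\cap\{r_n<\log M_n-\gamma\}) \leq 2^{\log M_n-\gamma}\,Q_{Y^n}(D_i)$; summing over the pairwise disjoint $D_i$'s produces the $2^{-\gamma}$ term. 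The argument is oblivious to feedback, since it uses only the joint law and the decoding partition.

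For step (ii), I would take $Q_{Y^n}(y^n)=\prod_{t=1}^n P_Y^*(y_t)$, with $P_Y^*$ a capacity-achieving output distribution. A standard KKT consequence for the capacity optimization is the uniform divergence bound $D(p(\cdot|x)\|P_Y^*)\leq C$ for every $x\in\sX$. Without feedback, the summands of $r_n=\sum_{t=1}^n \log(p(Y_t|X_t)/P_Y^*(Y_t))$ are conditionally independent given $X^n$, and a plain law of large numbers yields $P(r_n/n \geq C+\epsilon)\to 0$. With feedback, $X_t$ may depend on $(W_n,Y^{t-1})$; setting $\xi_t:=\log(p(Y_t|X_t)/P_Y^*(Y_t))-D(p(\cdot|X_t)\|P_Y^*)$ nonetheless produces a bounded-increment martingale difference with respect to the filtration $\sigma(X^t,Y^t)$, and Azuma-Hoeffding combined with the KKT bound on the compensator $\sum_t D(p(\cdot|X_t)\|P_Y^*)\leq nC$ again gives $P(r_n/n\geq C+\epsilon)\to 0$.

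For step (iii), given $R>C$ choose $\epsilon$ with $R>C+2\epsilon$ and set $\gamma=n\epsilon$. Then $\log M_n - \gamma \geq n(R-\epsilon) > n(C+\epsilon)$ for all large $n$, so the first term in the spectrum bound is dominated by $P(r_n/n > C+\epsilon)\to 0$ by step (ii), while $2^{-\gamma}=2^{-n\epsilon}\to 0$; therefore $P(W_n\neq g_n(Y^n))\to 1$. The main obstacle is the feedback case in step (ii): without feedback $r_n$ is a sum of independent terms and the LLN applies directly, but with feedback the code adapts causally to past outputs, so the summands are dependent. The martingale decomposition built around the KKT-uniform divergence bound $D(p(\cdot|x)\|P_Y^*)\leq C$ is the technical linchpin that absorbs this dependence, recovering in essence Kemperman's refinement of the Wolfowitz strong converse.
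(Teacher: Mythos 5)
The paper does not prove \cref{thm;strongconv}; it states the result and defers entirely to Kemperman \cite{kemperman1971strong}, invoking it as a black box in the proof of \cref{thm;NewTheorem2}. Your argument is therefore an independent derivation, and it is correct. The meta-converse in step~(i) does survive feedback: its only structural inputs are the partition of $\sY^n$ into decoding regions and the causal factorization $P(Y^n=y^n\mid W_n=w)=\prod_t p\bigl(y_t\mid x_t(w,y^{t-1})\bigr)$, so the change-of-measure step applies to each conditional law of $Y^n$ given $W_n=i$ directly (one should phrase it that way rather than in terms of a fixed codeword $x^n(i)$, since with feedback no such codeword exists, but the inequality is unchanged). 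Step~(ii) is likewise sound: $X_t$ is $\sigma(W_n,Y^{t-1})$-measurable, so $E[\xi_t\mid W_n,Y^{t-1}]=0$; the compensator $\sum_t D\bigl(p(\cdot\mid X_t)\,\|\,P_Y^*\bigr)$ is at most $nC$ pointwise by the Kuhn--Tucker saddle-point property of the capacity-achieving output distribution; and the increments are almost surely uniformly bounded because finiteness of the alphabets together with $p(\cdot\mid x)\ll P_Y^*$ for every $x$ (also a consequence of the Kuhn--Tucker conditions) bounds $\log\bigl(p(y\mid x)/P_Y^*(y)\bigr)$ over the support. Compared with Kemperman's route, which is in the Wolfowitz tradition and proceeds via an exponentially tilted change of measure, your information-spectrum-plus-Azuma decomposition is more modular and adapts to the feedback setting with essentially no extra work; the trade-off is that it leans on the Kuhn--Tucker divergence bound $D\bigl(p(\cdot\mid x)\,\|\,P_Y^*\bigr)\le C$ as a black box in place of the classical typicality count.
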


The above theorem also holds for DMCs with feedback (see \cite{kemperman1971strong} for a proof). In the proof of \cref{thm;NewTheorem2}, the encoding functions require that the channel has feedback, hence the need for this assumption in the theorem statement.%

\bibliographystyle{siamplain}
\bibliography{references}

\end{document}